\newtheorem{thm}{Theorem}
\newtheorem{defn}{Definition}
\newtheorem{lemma}{Lemma}
\newtheorem{pro}{Proposition}
\numberwithin{equation}{section} \setcounter{tocdepth}{1}
\begin{document}

\vspace{0.5in}
\renewcommand{\bf}{\bfseries}
\renewcommand{\sc}{\scshape}
\vspace{0.5in}

\title[Stability and Bifurcation]{Stability and Bifurcation in a Discrete Phytoplankton-Zooplankton Model with Holling-Type Toxic Effects}

\author{Sobirjon Shoyimardonov}

\address{S.K. Shoyimardonov$^{a,b}$ \begin{itemize}
\item[$^a$] V.I.Romanovskiy Institute of Mathematics, 9, University str.,
Tashkent, 100174, Uzbekistan;
\item[$^b$] National University of Uzbekistan,  4, University str., 100174, Tashkent, Uzbekistan.
\end{itemize}}
\email{shoyimardonov@inbox.ru}

%





\keywords{Discrete model, Holling type, plankton, toxin distribution, fixed point, stability, bifurcation, Neimark-Sacker.}

\subjclass[2010]{34D20 (92D25)}

\begin{abstract} In this paper, we investigate a discrete-time phytoplankton–zooplankton model that incorporates a linear predator functional response alongside a Holling-type toxin distribution. Both Holling type II and type III cases are considered, and we derive conditions on the model parameters that guarantee the existence of positive fixed points. We classify all fixed points and analyze their global stability. Furthermore, we establish the occurrence of a Neimark–Sacker bifurcation at the positive fixed point. Theoretical results are supported by numerical simulations, which illustrate the dynamic behavior of the system.
\end{abstract}

\maketitle

\section{Introduction}

Prey-predator models are fundamental tools in the mathematical modeling of ecological systems, offering a framework to understand the dynamic interactions between two biological populations: a prey species, which serves as a food source, and a predator species, which feeds on the prey. The foundational work in this field originates from the classical Lotka-Volterra equations, which use systems of differential equations to describe continuous-time interactions between species. Over time, these models have been refined to incorporate more realistic ecological factors, including carrying capacities, functional responses, time delays, spatial diffusion, and stochastic effects.

In recent years, extensive research has focused on the dynamics of predator-prey systems \cite{Cah, Chen3, Li}. A key component in these models is the functional response, which characterizes the feeding behavior of predators. Among the most widely studied are the Holling type II and type III functional responses \cite{Holling}, both of which can profoundly influence system dynamics \cite{Ko, Pen, SH, Wang, Zhou}.

While continuous-time models have historically provided critical insights into population dynamics, discrete-time models have gained prominence, especially in systems where population changes occur at distinct intervals—such as seasonal reproduction or data collection at regular time steps. Discrete models can reveal complex behaviors, including periodic oscillations, stability shifts, and chaos, arising from nonlinear species interactions \cite{De, May}.

A notable example of a prey-predator relationship is the interaction between phytoplankton and zooplankton. These organisms form the foundational and intermediate levels of aquatic food webs, respectively, and play essential roles in nutrient cycling, oxygen production, and overall ecosystem functioning. Understanding their dynamics is crucial for assessing ecosystem health, predicting responses to environmental stress, and managing aquatic resources. Recent studies have explored various aspects of plankton dynamics and their ecological implications \cite{Chatt, Chen, Shang, Hong, Mac, Tian, Sajan, RSH, RSHV, SH}.

A central objective in studying ecological models is to determine the long-term behavior of interacting populations. Stability analysis of fixed points helps predict whether a system will reach equilibrium or exhibit sustained oscillations. However, nonlinear ecological systems can also undergo bifurcations-qualitative changes in dynamics triggered by parameter variations. Among these, the Neimark-Sacker bifurcation--the discrete analog of the Hopf bifurcation--marks the transition from fixed-point stability to quasiperiodic or cyclic behavior on invariant closed curves \cite{Kuz}. Detecting such bifurcations is critical, as they signal the onset of complex, and potentially chaotic, ecological dynamics.

In this study, we investigate a discrete-time phytoplankton-zooplankton model that incorporates toxin-mediated interactions and employs both Holling type II and type III functional responses. We derive conditions for the existence and stability of positive fixed points, analyze the role of toxic effects in shaping system dynamics, and identify parameter regimes where Neimark-Sacker bifurcations arise. Our theoretical findings are supported by numerical simulations that demonstrate a rich spectrum of behaviors, from stable equilibria to bifurcation-induced oscillations, thereby deepening our understanding of how toxic interactions and functional responses influence aquatic ecosystems.

The following continuous-time phytoplankton-zooplankton model was introduced in \cite{Chatt}:
\begin{equation}\label{chat}
\left\{
\begin{aligned}
&\frac{dP}{dt}=bP\left(1-\frac{P}{K}\right)-\alpha f(P)Z,\\
&\frac{dZ}{dt}=\beta f(P)Z - rZ - \theta g(P)Z.
\end{aligned}
\right.
\end{equation}
Here, \( P \) and \( Z \) denote the densities of phytoplankton and zooplankton, respectively. The parameters \( \alpha > 0 \) and \( \beta > 0 \) represent, respectively, the predation rate and the conversion efficiency of zooplankton feeding on phytoplankton. The parameter \( b > 0 \) denotes the intrinsic growth rate of phytoplankton, while \( K > 0 \) is the environmental carrying capacity. The zooplankton population experiences a natural mortality rate denoted by \( r > 0 \). The function \( f(P) \) characterizes the predator's functional response to phytoplankton density, and \( g(P) \) describes the distribution of toxic substances released by phytoplankton. The parameter \( \theta > 0 \) quantifies the rate of toxin liberation.

The authors of \cite{Chatt} investigated the local stability of system~\eqref{chat} under various functional forms of \( f(P) \) and \( g(P) \), demonstrating that toxin-producing phytoplankton can play a regulatory role in population dynamics and effectively inhibit uncontrolled plankton blooms.

In \cite{Chen}, the authors investigated system~\eqref{chat} in continuous time by adopting specific functional forms for the functional response and toxin release:
\[
f(P) = \frac{P^h}{1 + cP^h}, \quad g(P) = P, \quad \text{for } h = 1, 2.
\]

In this paper, we study the system \eqref{chat} with the functional choices
\[
f(P) = P, \quad g(P) = \frac{P^h}{1 + cP^h}, \quad \text{for } h = 1,2.
\]
Using appropriate scaling transformations, the model is reformulated as:

\begin{equation}\label{chenn}
\left\{
\begin{aligned}
&\frac{du}{dt} = u(1 - u) - uv,\\
&\frac{dv}{dt} = \beta uv - rv - \frac{\theta u^h v}{1 + c u^h},
\end{aligned}
\right.
\end{equation}
where $u$ and $v$ represent the rescaled phytoplankton and zooplankton populations, respectively.

In this paper, we extend the analysis of system~\eqref{chenn} by considering its discrete-time counterpart, incorporating both Holling type II and Holling type III toxin distribution functions. Our study focuses on the existence and stability of positive fixed points, as well as the occurrence of Neimark–Sacker bifurcations, which play a crucial role in understanding the system's long-term dynamics.

The paper is organized as follows: In Section~2, we examine the existence and stability of positive fixed points for both the Holling type II and Holling type III cases. Section~3 is devoted to the bifurcation analysis of system~\eqref{h12}, where we prove that a Neimark–Sacker bifurcation occurs at the unique positive fixed point of the model with Holling type II, and at one of the (up to three) positive fixed points of the model with Holling type III. Finally, in Section~4, we present numerical simulations to illustrate and support our theoretical findings.


\section{Existence and stability of positive fixed points}

We consider the discrete-time version of model~\eqref{chenn}, which takes the form:
\begin{equation}\label{h12}
V:
\begin{cases}
u^{(1)} = u(2 - u) - uv, \\[2mm]
v^{(1)} = \beta uv + (1 - r)v - \dfrac{\theta u^{h}v}{1 + cu^{h}},
\end{cases}
\end{equation}
where \( u \) and \( v \) represent the densities of phytoplankton and zooplankton, respectively, and all parameters \( \beta, r, \theta, c \) are assumed to be positive throughout the paper.

It is straightforward to verify that system~\eqref{h12} always admits two nonnegative equilibria: \( (0, 0) \) and \( (1, 0) \). To find positive interior fixed points, we set \( v^{(1)} = v \) and obtain that a point \( (u, v) \), with \( u \in (0, 1) \), is a positive fixed point of system~\eqref{h12} if and only if \( u\) satisfies the following equation:

\begin{equation}\label{theta}
\theta=\Psi_h(u):=\frac{(\beta u-r)(1+c u^{h})}{u^{h}}.
\end{equation}

Thus, the operator (\ref{h12}) does not have any positive fixed point when $\beta\leq r.$

It is obvious that:


 $\lim_{u\rightarrow0}\Psi_h(u)=\infty$ and $\lim_{u\rightarrow1}\Psi_h(u)=(1+c)(\beta-r).$

\medskip
%
%
\begin{pro}\label{prop1}
For the fixed points \( (0, 0) \) and \( (1, 0) \) of system~\eqref{h12}, the following statements hold:
\[
(0, 0) =
\begin{cases}
\text{nonhyperbolic}, & \text{if } r = 2, \\[2mm]
\text{saddle}, & \text{if } 0 < r < 2, \\[2mm]
\text{repelling}, & \text{if } r > 2,
\end{cases}
\]
\[
(1, 0) =
\begin{cases}
\text{nonhyperbolic}, & \text{if } \theta = (\beta - r)(1 + c) \text{ or } \theta = (2 + \beta - r)(1 + c), \\[2mm]
\text{attractive}, & \text{if } (\beta - r)(1 + c) < \theta < (2 + \beta - r)(1 + c), \\[2mm]
\text{saddle}, & \text{otherwise}.
\end{cases}
\]
\end{pro}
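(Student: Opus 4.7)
The plan is a routine linearization at the two boundary equilibria. The key observation, which makes the calculation nearly automatic, is that both $(0,0)$ and $(1,0)$ lie on the invariant axis $\{v=0\}$, and on this axis the Jacobian of $V$ is upper triangular: the $(2,1)$ entry $\partial v^{(1)}/\partial u$ carries $v$ as a factor and therefore vanishes. The eigenvalues can then be read off the diagonal, and the classification reduces to comparing their moduli to $1$.

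Explicitly, I would record
\[
J(u,0) = \begin{pmatrix} 2-2u & -u \\ 0 & \beta u + 1 - r - \dfrac{\theta u^h}{1+cu^h} \end{pmatrix}.
\]
At $(0,0)$ the diagonal reads $(2,\,1-r)$. Since $|2|>1$ unconditionally, the classification depends solely on $|1-r|$: under the standing assumption $r>0$, one gets the non-hyperbolic case exactly at $|1-r|=1$, i.e.\ $r=2$; the saddle case at $|1-r|<1$, i.e.\ $0<r<2$ (one contracting and one expanding direction); and the repelling case at $|1-r|>1$, i.e.\ $r>2$.

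At $(1,0)$ the diagonal reads $(0,\,\mu)$ with $\mu := \beta + 1 - r - \theta/(1+c)$, and the roles of the two eigenvalues reverse: $|0|<1$ is automatic, so the behavior is controlled by $|\mu|$. Solving $\mu=1$ and $\mu=-1$ separately yields the two thresholds $\theta=(\beta-r)(1+c)$ and $\theta=(2+\beta-r)(1+c)$. Unpacking $|\mu|<1$ gives exactly the open interval $(\beta-r)(1+c)<\theta<(2+\beta-r)(1+c)$ stated for the attracting case, and the remaining parameter values give $|\mu|>1$, hence a saddle.

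\textbf{Expected obstacle.} There is essentially none: the proof is entirely mechanical once the triangular structure on the invariant axis $\{v=0\}$ is noticed. The only small point deserving explicit mention is that the non-hyperbolic case at $(1,0)$ splits into two disjoint sub-cases corresponding to $\mu=+1$ and $\mu=-1$, which together produce the ``$\theta=(\beta-r)(1+c)$ or $\theta=(2+\beta-r)(1+c)$'' alternative in the statement; keeping both alternatives straight is the only thing one has to watch.
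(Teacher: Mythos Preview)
Your proposal is correct and follows essentially the same route as the paper: linearize at the two boundary fixed points, observe that the Jacobian is upper triangular there (the paper computes $J(u,v)$ in full and then specializes, you go straight to $J(u,0)$, but the content is identical), read off the diagonal eigenvalues, and compare moduli to $1$. No substantive difference.
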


\begin{proof} The Jacobian of the operator \eqref{h12} is given by
\begin{equation}\label{jac}
J(u,v) =
\begin{bmatrix}
2 - 2u - v & -u \\
\beta v - \frac{\theta h u^{h-1} v}{(1+cu^h)^2} & \beta u + 1 - r - \frac{\theta u^h}{1+cu^h}
\end{bmatrix}.
\end{equation}

Evaluating at the point \( (0,0) \), we get
\[
J(0,0) =
\begin{bmatrix}
2 & 0 \\
0 & 1-r
\end{bmatrix},
\]
with eigenvalues \( \lambda_1 = 2 \) and \( \lambda_2 = 1-r \). Using this, the proof for \( (0,0) \) follows easily.

Similarly, at \( (1,0) \), the Jacobian becomes
\[
J(1,0) =
\begin{bmatrix}
0 & -\frac{1}{1+c} \\
0 & \beta + 1 - r - \frac{\theta}{1+c}
\end{bmatrix},
\]
with eigenvalues \( \lambda_1 = 0 \) and \( \lambda_2 = \beta + 1 - r - \frac{\theta}{1+c} \).

By solving the conditions \( |\lambda_2| = 1 \) and \( |\lambda_2| < 1 \), we derive the necessary parameter constraints. Therefore, the proposition is proved.

\end{proof}

In this section we consider existence and stability of fixed points for both operators.

 Let $h=1,$  then the operator (\ref{h12})  has the following form
\begin{equation}\label{h1}
V_1:
\begin{cases}
u^{(1)}=u(2-u)-uv\\[2mm]
v^{(1)}=\beta uv+(1-r)v-\frac{\theta uv}{1+cu}.
\end{cases}
\end{equation}

If $h=2,$ then  the operator (\ref{h12})  has the form

\begin{equation}\label{h2}
V_2:
\begin{cases}
u^{(1)}=u(2-u)-uv\\[2mm]
v^{(1)}=\beta uv+(1-r)v-\frac{\theta u^2v}{1+cu^2}.
\end{cases}
\end{equation}

Let $\widetilde{E}=(\widetilde{u},\widetilde{v})$ be a positive fixed point of the operator (\ref{h12}.) Then using $\widetilde{v}=1-\widetilde{u}$ and $\Psi_h(\widetilde{u})=\theta$ we get that the characteristic polynomial of Jacobian (\ref{jac}) at the positive fixed point $\widetilde{E}$ is given by

\begin{equation}\label{chareq}
F(\lambda, \widetilde{u}) = \lambda^2 - p(\widetilde{u})\lambda + q(\widetilde{u}),
\end{equation}
where
\begin{equation}\label{pq}
p(\widetilde{u}) = 2-\widetilde{u}, \quad q(\widetilde{u}) = 1-\widetilde{u}+\widetilde{u}(1-\widetilde{u})\left(\beta-\frac{\theta h\widetilde{u}^{h-1}}{(1+c\widetilde{u}^h)^2}\right).
\end{equation}

\subsection{Existence and stability of positive fixed point of (\ref{h1})}

\begin{pro}\label{prop2} The operator~\eqref{h1} has a unique positive fixed point \(\overline{E} = (\overline{u}, \overline{v})\) if and only if \(\beta > r\) and \(0 < \theta < (\beta - r)(1+c),\) where
\[
\overline{u} = \frac{rc+\theta-\beta+\sqrt{(\beta-rc-\theta)^2+4rc\beta}}{2c\beta}, \quad \overline{v} = 1-\overline{u}.
\]
\end{pro}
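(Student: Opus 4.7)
The plan is to reduce the fixed-point condition to a single polynomial equation in $u$, then read off both the existence conditions and the closed-form expression from a standard quadratic analysis. Since the first equation of \eqref{h1} factors as $u(1 - u - v) = 0$, any interior fixed point must satisfy $\overline{v} = 1 - \overline{u}$. Specializing \eqref{theta} to $h = 1$ and clearing the denominator $u > 0$ converts $\Psi_1(u) = \theta$ into the quadratic
$$c\beta\, u^2 + (\beta - rc - \theta)\, u - r = 0.$$

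I would next observe that the product of the roots of this quadratic equals $-r/(c\beta) < 0$, so for every admissible choice of parameters it has exactly one positive and one negative root. The positive root comes from the $+$ branch of the quadratic formula and is precisely the expression for $\overline{u}$ stated in the proposition; uniqueness of a positive $u$ is therefore automatic, and only the constraint $\overline{u} < 1$, equivalent to $\overline{v} > 0$, remains to be imposed.

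To translate $\overline{u} < 1$ into a parameter condition, I would use that the parabola in the display above opens upward and already has its negative root on $(-\infty, 0)$; hence its positive root lies below $1$ if and only if the parabola is strictly positive at $u = 1$. Substituting $u = 1$ yields $(1 + c)(\beta - r) - \theta$, so the requirement becomes $0 < \theta < (1 + c)(\beta - r)$, which in particular forces $\beta > r$. Conversely, under these two inequalities the sign at $u=1$ is positive and the analysis reverses, giving sufficiency. The argument is essentially algebraic throughout, and I do not anticipate a substantive obstacle beyond careful sign bookkeeping when identifying the quadratic-formula root with the expression stated in the proposition.
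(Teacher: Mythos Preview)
Your proof is correct and reaches the same quadratic $c\beta u^2 + (\beta - rc - \theta)u - r = 0$ and the same threshold $\theta < (1+c)(\beta - r)$ as the paper, but the route is slightly different. The paper argues via calculus: it shows $\Psi_1'(u) > 0$ on $(0,\infty)$, so $\Psi_1$ is strictly increasing, and since $\Psi_1(u)\to -\infty$ as $u\to 0^+$ while $\Psi_1(1) = (\beta - r)(1+c)$, the equation $\Psi_1(u) = \theta$ has a unique solution in $(0,1)$ precisely when $0 < \theta < (\beta - r)(1+c)$. You instead bypass $\Psi_1$ entirely and work directly with the quadratic: Vieta's formula gives a negative product of roots (hence exactly one positive root), and the sign of the quadratic at $u=1$ pins down when that root lies below $1$. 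Your argument is purely algebraic and a bit more self-contained; the paper's monotonicity argument is the one that generalizes more naturally to the $h=2$ case treated later, where the analogous equation is cubic and root-product tricks no longer suffice.
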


\begin{proof} Let \( h=1 \). Then the function \( \Psi_1(u) \) has the form
\[
\Psi_1(u) := \frac{(\beta u - r)(1+cu)}{u},
\]
which has the derivative
\[
\Psi_1'(u) = \frac{\beta cu^3 + r}{u^2}.
\]
Therefore, \( \Psi_1'(u) > 0 \) when \( u > 0 \), i.e., the function \( \Psi_1(u) \) is monotone increasing.

Moreover,
\[
\Psi_1(1) = (\beta - r)(1+c).
\]
Thus, the equation \eqref{theta} has a solution \( \overline{u} \) in \( (0,1) \) if and only if \( \theta < (\beta - r)(1+c) \). This solution satisfies the equation:
\[
c\beta u^2 + (\beta - rc - \theta)u - r = 0.
\]

Additionally, since \( \overline{u} \) is a solution of \( u^{(1)} = u \), it follows that \( \overline{v} = 1 - \overline{u} \). The proof is complete.

\end{proof}

\begin{pro}\label{prop3} Let $h=1$ and \( q(u) \) be defined as in~\eqref{pq}.
For the positive fixed point \( \overline{E} = (\overline{u}, \overline{v}) \) of the operator~\eqref{h1}, the following holds:
\[
\overline{E} =
\begin{cases}
\text{attractive}, & \text{if } q(\overline{u}) < 1, \\[2mm]
\text{repelling}, & \text{if } q(\overline{u}) > 1, \\[2mm]
\text{nonhyperbolic}, & \text{if } q(\overline{u}) = 1.
\end{cases}
\]
\end{pro}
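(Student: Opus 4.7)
The plan is to invoke Jury's (Schur--Cohn) criterion for the characteristic polynomial $F(\lambda,\overline{u}) = \lambda^2 - p(\overline{u})\lambda + q(\overline{u})$ given in (\ref{chareq}). For a real monic quadratic, both roots lie strictly inside the unit disk if and only if $F(1) > 0$, $F(-1) > 0$, and $q < 1$. The key observation I will exploit is that once $F(1) > 0$ and $F(-1) > 0$ are established, no eigenvalue can cross the unit circle through $\pm 1$, so the modulus of the two eigenvalues is controlled entirely by $q$: both moduli are less than, equal to, or greater than one exactly according as $q(\overline{u}) < 1$, $q(\overline{u}) = 1$, or $q(\overline{u}) > 1$ (the real case is handled by the sign analysis of $(1\mp\lambda_1)(1\mp\lambda_2) = F(\pm 1)$; in the complex-conjugate case $|\lambda|^2 = q$ directly). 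This yields precisely the three alternatives in the proposition, so the real work is to verify $F(\pm 1) > 0$ at $\overline{u}$.

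The first step is to substitute $p(\overline{u}) = 2 - \overline{u}$ and the formula for $q$ in (\ref{pq}) with $h=1$ into $F(1) = 1 - p(\overline{u}) + q(\overline{u})$. A short calculation yields
\[
F(1) \;=\; \overline{u}(1-\overline{u})\left(\beta - \frac{\theta}{(1+c\overline{u})^2}\right).
\]
To evaluate the bracketed factor I will use the fixed-point relation $\theta = \Psi_1(\overline{u}) = (\beta\overline{u}-r)(1+c\overline{u})/\overline{u}$ from (\ref{theta}) to eliminate $\theta$; this reduces the bracket to $(\beta c\overline{u}^2 + r)/(\overline{u}(1+c\overline{u}))$, which is manifestly positive since $\beta, c, r > 0$. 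Hence $F(1) > 0$. The same substitution applied to $q(\overline{u})$ yields the clean factorization
\[
q(\overline{u}) \;=\; \frac{(1-\overline{u})(1 + c\overline{u} + r + \beta c\overline{u}^2)}{1+c\overline{u}} \;>\; 0 \qquad \text{for } \overline{u}\in(0,1),
\]
from which $F(-1) = 3 - \overline{u} + q(\overline{u}) > 2 > 0$ is immediate.

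The only delicate point is the algebraic simplification of $F(1)$: the positivity of $\beta - \theta/(1+c\overline{u})^2$ relies on the precise form of $\Psi_1(\overline{u})$ and does not follow from the admissibility range $0 < \theta < (\beta-r)(1+c)$ alone. Once this is in place, the trichotomy $q(\overline{u}) \lessgtr 1$ versus $q(\overline{u}) = 1$ reads off directly from the Jury criterion above and gives the attractive, repelling, and nonhyperbolic cases stated in the proposition.
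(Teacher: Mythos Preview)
Your proof is correct and follows essentially the same strategy as the paper's: compute $F(1,\overline{u}) = \overline{u}(1-\overline{u})\bigl(\beta - \theta/(1+c\overline{u})^2\bigr)$, use the fixed-point relation $\theta = \Psi_1(\overline{u})$ to show the bracket is positive, observe that $F(-1,\overline{u}) > 0$ then follows (the paper uses $F(-1)=F(1)+2p(\overline{u})>0$; you use $q(\overline{u})>0$, which is equivalent), and finish with the root-location criterion (the paper's Lemma~\ref{lem1}). The only cosmetic difference is that the paper introduces an auxiliary function $\phi(x)=\beta(1+cx)^2$ and shows $\phi(x)-\Psi_1(x)>0$ for $x>0$, whereas you substitute $\theta=\Psi_1(\overline{u})$ directly into the bracket to obtain $(\beta c\overline{u}^2+r)/(\overline{u}(1+c\overline{u}))$; these are the same computation, and in fact the paper's numerator $\beta c^2 x^3+\beta c x^2+rcx+r$ factors as $(1+cx)(\beta c x^2+r)$.
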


\begin{proof} Consider the characteristic function~\eqref{chareq} at \( \lambda=1 \):
\[
F(1, \overline{u}) = \overline{u}(1-\overline{u})\left(\beta-\frac{\theta}{(1+c\overline{u})^2}\right).
\]
From this, we obtain that \( F(1, \overline{u})>0 \) is equivalent to
\[
\theta < \beta (1 + c \overline{u})^2.
\]

Define the function
\[
\phi(x) = \beta (1 + c x)^2
\]
and consider the difference \( \phi(x) - \Psi_1(x) \):
\begin{equation}
\phi(x) - \Psi_1(x) = \beta (1 + c x)^2 - \frac{(\beta x - r)(1 + c x)}{x} = \frac{\beta c^2 x^3 + \beta c x^2 + rc x + r}{x}.
\end{equation}
Clearly, \( \phi(x) - \Psi_1(x) > 0 \) for all \( x > 0. \) Therefore, we have \( \theta = \Psi_1(\overline{u}) < \phi(\overline{u}), \) which implies \( F(1, \overline{u}) > 0. \)

If \( F(1, \overline{u}) > 0, \) then \( F(-1, \overline{u}) > 0 \) follows directly. By Lemma~\ref{lem1}, the proof of the proposition is complete.

\end{proof}

\begin{pro}\label{prop4} Let \( v^{(1)} \) be defined as in (\ref{h1}).
If one of the following conditions on the parameters holds, then \( v^{(1)} \geq 0 \) for any \( u \in [0,1] \) and $v\geq0$:

(a) \( 0<r\leq 1 \), \( 0 < \theta \leq 1+\beta-r \) and \( c >0 \);

(b) \( 0<r<1 \), \( 1+\beta-r<\theta\leq\frac{(1+\beta-r)^2}{\beta}\), and \( c\geq\frac{\theta}{1+\beta-r}-1 \);

(c) \( 0<r<1 \), \(\theta>\frac{(1+\beta-r)^2}{\beta}\), and \( c\geq\frac{(\sqrt{\beta}-\sqrt{\theta})^2}{1-r}\).
\end{pro}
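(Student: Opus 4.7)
The plan is to factor out $v\ge 0$ and reduce the claim to a one-variable inequality. Writing
\[
v^{(1)} \;=\; v\cdot g(u), \qquad g(u)\;:=\;\beta u+(1-r)-\frac{\theta u}{1+cu},
\]
the proposition becomes the statement that $g(u)\ge 0$ for every $u\in[0,1]$. I would first record the analytic data of $g$: one checks $g(0)=1-r$, $g(1)=(1+\beta-r)-\theta/(1+c)$, $g'(u)=\beta-\theta/(1+cu)^2$, and $g''(u)=2c\theta/(1+cu)^3>0$, so $g$ is strictly convex on $[0,\infty)$. If $\theta\le\beta$ then $g'\ge 0$ on $[0,\infty)$ and $g\ge g(0)=1-r\ge 0$; otherwise $g$ has a unique critical point $u^*=(\sqrt{\theta/\beta}-1)/c>0$, and a short computation using $1+cu^*=\sqrt{\theta/\beta}$ gives
\[
g(u^*)\;=\;(1-r)-\frac{(\sqrt\theta-\sqrt\beta)^2}{c}.
\]

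The workhorse of the argument is the elementary inequality: whenever $u^*\le 1$, equivalently $\sqrt\theta-\sqrt\beta\le c\sqrt\beta$, one has
\[
(\sqrt\theta-\sqrt\beta)^2 \;\le\; c\sqrt\beta(\sqrt\theta-\sqrt\beta) \;=\; c\bigl(\sqrt{\beta\theta}-\beta\bigr),
\]
so $g(u^*)\ge 0$ as soon as $\sqrt{\beta\theta}-\beta\le 1-r$. When $u^*>1$, the minimum of $g$ on $[0,1]$ is $g(1)$, so non-negativity reduces to $(1+c)(1+\beta-r)\ge\theta$.

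I would then verify the three cases. In case (a), $g(0)\ge 0$ and $g(1)>(1+\beta-r)-\theta\ge 0$ are immediate from $r\le 1$ and $\theta\le 1+\beta-r$; the bound $\sqrt{\beta\theta}-\beta\le 1-r$ needed for the interior-minimum subcase comes from the chain $\sqrt{\beta\theta}\le\sqrt{\beta(1+\beta-r)}\le\beta+1-r$, whose second step reduces on squaring to $0\le (1-r)(\beta+1-r)$. In case (b), $g(0)>0$; the hypothesis $c\ge\theta/(1+\beta-r)-1$ is exactly $g(1)\ge 0$, and $\theta\le(1+\beta-r)^2/\beta$ is exactly $\sqrt{\beta\theta}\le 1+\beta-r$, which feeds the workhorse identity. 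In case (c), I would first deduce $u^*\le 1$: the hypothesis $\theta>(1+\beta-r)^2/\beta$ gives $\sqrt\beta(\sqrt\theta-\sqrt\beta)=\sqrt{\beta\theta}-\beta>1-r$, and combining with $c(1-r)\ge(\sqrt\theta-\sqrt\beta)^2$ yields $c\sqrt\beta(\sqrt\theta-\sqrt\beta)\ge(\sqrt\theta-\sqrt\beta)^2$, so $c\sqrt\beta\ge\sqrt\theta-\sqrt\beta$; the inequality $g(u^*)\ge 0$ then reads precisely as the stated hypothesis on $c$.

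The main obstacle I anticipate is the interior-minimum subcase of (a), where $c$ is only required to be positive and the hypothesis does not directly control $g(u^*)$: the argument must exploit the geometric restriction $u^*\le 1$ through the workhorse inequality, together with the algebraic fact $\sqrt{\beta(1+\beta-r)}\le\beta+1-r$, to produce a bound in terms of $c(1-r)$.
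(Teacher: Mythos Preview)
Your proof is correct and takes a genuinely different route from the paper. The paper clears the denominator and reduces $v^{(1)}\ge 0$ to nonnegativity of the quadratic
\[
P(u)=\beta c\,u^2+(\beta+c-rc-\theta)u+(1-r)
\]
on $[0,1]$, then appeals (tersely) to the location of its two roots $u_1,u_2$ given by the quadratic formula. You instead keep the original function $g(u)=\beta u+(1-r)-\theta u/(1+cu)$ and exploit its strict convexity: either $g$ is monotone on $[0,1]$, or it has an interior minimizer $u^*=(\sqrt{\theta/\beta}-1)/c$ whose value you compute in closed form as $g(u^*)=(1-r)-(\sqrt\theta-\sqrt\beta)^2/c$. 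Your ``workhorse'' step, which converts the geometric constraint $u^*\le 1$ into the algebraic bound $(\sqrt\theta-\sqrt\beta)^2\le c(\sqrt{\beta\theta}-\beta)$, plays the role that root-location and discriminant computations play in the paper's approach. What your method buys is transparency: the threshold in case~(c) is seen to be \emph{exactly} the condition $g(u^*)\ge 0$, and you explain cleanly why case~(c) forces $u^*<1$ (so that the global minimum on $[0,1]$ really is $g(u^*)$). The paper's approach, by contrast, packages everything into the single quadratic $P$, which is shorter to state but leaves the case splits and the provenance of the three parameter regimes to the reader.
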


\begin{proof}
The condition \( v^{(1)} \geq 0 \) is equivalent to
\[
\beta cu^2+ (\beta + c - r c-\theta) + 1 - r \geq 0,
\]
and the proof of proposition follows directly by solving the inequalities
$u_1\leq0$ and $u_2\geq1,$ where
\[
u_{1,2}=\frac{rc+\theta-\beta-c\mp\sqrt{(rc+\theta-\beta-c)^2-4\beta c(1-r)}}{2\beta c}.
\]
\end{proof}

\begin{lemma}\label{lem2}
Assume that at least one of the conditions \((a)-(c)\) holds and that either \( \beta \leq r \) or \( \theta \geq (\beta - r)(1 + c) \). Then, the following set is invariant under the operator (\ref{h1}):
    \begin{equation}\label{minv}
    M = \left\{ (u,v) \in \mathbb{R}^2 \mid 0 \leq u \leq 1, \, 0 \leq v \leq 2 - u \right\}.
    \end{equation}
\end{lemma}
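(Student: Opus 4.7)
The plan is to verify that if $(u,v) \in M$, then $(u^{(1)}, v^{(1)}) \in M$, by establishing in turn each of the four inequalities defining $M$. First I rewrite the map~\eqref{h1} as $u^{(1)} = u(2-u-v)$ and $v^{(1)} = v\,A(u)$, where $A(u) := \beta u + (1-r) - \frac{\theta u}{1+cu}$. Then $u^{(1)}\ge 0$ is immediate from $u\ge 0$ combined with $v\le 2-u$; the bound $u^{(1)}\le u(2-u)\le 1$ follows since $u(2-u)$ attains its maximum $1$ on $[0,1]$; and $v^{(1)}\ge 0$ is precisely what Proposition~\ref{prop4} asserts under any of the conditions $(a)$--$(c)$, which in fact supplies $A(u)\ge 0$ on $[0,1]$.

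The main work is the inequality $u^{(1)} + v^{(1)} \le 2$, and the key auxiliary claim is that the extra hypothesis forces $A(u) \le 1$ throughout $[0,1]$. Indeed, $A(u)\le 1$ is equivalent to $Q(u) := \beta c u^{2} + (\beta - rc - \theta)u - r \le 0$, and $Q$ is convex because its leading coefficient $\beta c$ is positive. One checks directly that $Q(0) = -r < 0$, while $Q(1) = (\beta-r)(1+c) - \theta$; under either branch of the hypothesis ($\beta\le r$ makes $(\beta-r)(1+c)$ nonpositive, or $\theta \ge (\beta-r)(1+c)$ directly), one has $Q(1)\le 0$. Convexity then gives $Q(u)\le (1-u)Q(0)+u\,Q(1)\le 0$ on $[0,1]$, so $0\le A(u)\le 1$ throughout.

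Having $A(u)\in[0,1]$, I write $u^{(1)}+v^{(1)} = u(2-u) + v(A(u)-u)$ and split on the sign of $A(u)-u$. If $A(u)\le u$, the second term is nonpositive, hence $u^{(1)}+v^{(1)}\le u(2-u)\le 1$. If $A(u)>u$, then $0 < A(u)-u\le 1-u$ (using $A(u)\le 1$ and $u\le 1$); combined with $v\le 2-u$, this yields $v(A(u)-u)\le (2-u)(1-u)$, and consequently $u^{(1)}+v^{(1)} \le u(2-u) + (2-u)(1-u) = (2-u)[u + (1-u)] = 2-u \le 2$. In either case the required bound holds.

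The one delicate point is recognizing that the two alternative assumptions $\beta\le r$ and $\theta\ge (\beta-r)(1+c)$---which by Proposition~\ref{prop2} are precisely the parameter regimes in which system~\eqref{h1} admits no positive fixed point---translate directly into $Q(1)\le 0$, and thereby, via convexity of $Q$, into the uniform bound $A(u)\le 1$ that drives the proof. Everything else reduces to elementary bookkeeping with a case split on the sign of $A(u)-u$.
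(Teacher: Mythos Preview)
Your proof is correct. Both your argument and the paper's hinge on the same key observation: the hypothesis $\beta\le r$ or $\theta\ge(\beta-r)(1+c)$ forces $A(u)\le 1$ on $[0,1]$ (equivalently, $v^{(1)}\le v$). Where you diverge is in verifying the last inequality $v^{(1)}\le 2-u^{(1)}$. The paper splits cases according to whether $(u,v)$ lies above or below the line $v=1-u$: below it, $u^{(1)}\ge u$ and $v^{(1)}\le v\le 1\le 2-u^{(1)}$; above it, $u^{(1)}<u$ so $v^{(1)}\le v\le 2-u\le 2-u^{(1)}$. You instead write $u^{(1)}+v^{(1)}=u(2-u)+v(A(u)-u)$ and split on the sign of $A(u)-u$, handling both cases with direct algebra. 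Your route is slightly more self-contained---it avoids any appeal to the monotonicity of the iterated sequences $u^{(n)}$, $v^{(n)}$---while the paper's version has a more geometric flavor. Either way the argument goes through.
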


\begin{proof} By conditions \((a)-(c)\), we have \( v^{(1)} \geq 0 \). Note that if \( 0 \leq u \leq 2 \) and \( 0 < v \leq 2 - u \), then after one iteration, we obtain \( 0 \leq u^{(1)} \leq 1 \).

Additionally, if \( \beta \leq r \) or \( \theta \geq (\beta - r)(1 + c) \), then the sequence \( v^{(n)} \) is decreasing (as established in the proof of Proposition \ref{prop2}). The sequence \( u^{(n)} \) decreases above the line \( v = 1 - u \) and increases below it. We now consider two cases:

Case 1: If \( v \leq 1 - u \), then \( u^{(1)} \geq u \). From \( 0 \leq u^{(1)} \leq 1 \), it follows that
\[
1 \leq 2 - u^{(1)} \leq 2.
\]
Since \( v^{(1)} \leq v \leq 1 \), the inequality \( v^{(1)} > 2 - u^{(1)} \) is impossible. Thus, we conclude that
\[
v^{(1)} \leq 2 - u^{(1)}.
\]

Case 2: If \( v > 1 - u \), then \( u^{(1)} < u \). Since the function \( v = 1 - u \) is decreasing, we obtain
\[
v^{(1)} \leq v \leq 2 - u \leq 2 - u^{(1)}.
\]

Thus, the proof is complete.
\end{proof}

%

\begin{pro}\label{prop5}
Assume that at least one of the conditions \((a)-(c)\) holds and that either \( \beta \leq r \) or \( \theta \geq (\beta - r)(1 + c) \). Then, for any initial point \( (u^0,v^0) \in M \), the trajectory of the operator (\ref{h1}) satisfies the following:

- If \( u^0 = 0 \), the trajectory converges to the fixed point \((0,0) \).

- If \( u^0 > 0 \), the trajectory converges to the fixed point \( (1,0) \).

\end{pro}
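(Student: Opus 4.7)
The proof splits on whether $u^0 = 0$. When $u^0 = 0$, the first equation of \eqref{h1} preserves $u^{(n)} = 0$ and the second reduces to $v^{(n+1)} = (1-r) v^{(n)}$; conditions (a)--(c) all impose $r \in (0, 1]$, so $|1 - r| < 1$ and $v^{(n)} \to 0$. Thus the trajectory converges to $(0, 0)$.

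For $u^0 > 0$ the strategy has two stages: first drive $v^{(n)} \to 0$ by a monotone-Lyapunov argument, then use that the $u$-update becomes an asymptotically vanishing perturbation of the logistic map $L(u) = u(2-u)$, for which $u = 1$ attracts every $u \in (0, 1]$. Set $\Phi(u) := \beta u - r - \theta u/(1 + c u)$, so that $v^{(n+1)} = v^{(n)}[1 + \Phi(u^{(n)})]$. The identity $\Phi(u) = u(\Psi_1(u) - \theta)/(1 + c u)$ for $u > 0$, combined with the monotonicity of $\Psi_1$ from Proposition~\ref{prop2} and the hypothesis ``$\beta \leq r$ or $\theta \geq (\beta - r)(1+c)$'', gives $\Phi(u) \leq 0$ on $[0, 1]$. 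Hence $v^{(n)}$ is nonincreasing and converges to some $v^* \geq 0$. To rule out $v^* > 0$, telescope
\[
v^0 - v^* = \sum_{n=0}^\infty v^{(n)}\,|\Phi(u^{(n)})| \;\geq\; v^* \sum_{n=0}^\infty |\Phi(u^{(n)})|,
\]
which forces $\Phi(u^{(n)}) \to 0$. Under the hypothesis, $\Phi$ can vanish on $[0, 1]$ only at $u = 1$, and only in the borderline case $\theta = (\beta - r)(1+c)$, so $u^{(n)} \to 1$; but then $u^{(n+1)} = u^{(n)}(2 - u^{(n)} - v^{(n)}) \to 1 - v^* < 1$, contradicting $u^{(n)} \to 1$. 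Therefore $v^* = 0$.

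For the second stage, one first checks that the open set $\{(u, v) : u > 0,\ v < 2 - u\}$ is forward-invariant (using $v^{(n+1)} \leq v^{(n)}$ and the formula $u^{(n+1)} = u^{(n)}(2 - u^{(n)} - v^{(n)})$), so $u^{(n)} > 0$ for all $n$. Fix $\epsilon \in (0, 1/2)$ and choose $N$ with $v^{(n)} < \epsilon^2$ for $n \geq N$. For $u^{(n)} \in (0, 1 - \epsilon]$, the bound $2 - u^{(n)} - v^{(n)} \geq 1 + \epsilon - \epsilon^2 \geq 1 + \epsilon/2$ yields geometric growth $u^{(n+1)} \geq (1 + \epsilon/2)\, u^{(n)}$, which drives $u^{(n)}$ past $1 - \epsilon$ in finitely many iterates. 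For $u^{(n)} \in (1 - \epsilon, 1]$, the estimates $L(u^{(n)}) \geq L(1 - \epsilon) = 1 - \epsilon^2$ and $u^{(n)} v^{(n)} \leq \epsilon^2$ give $u^{(n+1)} \geq 1 - 2\epsilon^2 > 1 - \epsilon$, so this range is forward-invariant. Letting $\epsilon \downarrow 0$ yields $u^{(n)} \to 1$. The main delicate step is ruling out $v^* > 0$ in the borderline subcase $\theta = (\beta - r)(1+c)$, where $\Phi$ vanishes at $u = 1$; this is handled by combining the forced $u^{(n)} \to 1$ with the explicit $u$-update to derive the contradiction $1 - v^* = 1$.
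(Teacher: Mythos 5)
Your argument is correct in substance but takes a genuinely different route from the paper. The paper's own proof is a two-line Lyapunov/LaSalle argument: it takes $L(u,v)=v$ on the compact invariant set $M$, notes $\Delta L\le 0$ with equality only on the segment $\{v=0\}$, and invokes LaSalle's invariance principle. Your first stage encodes the same monotonicity (your $\Phi\le 0$, via the identity $\Phi(u)=u(\Psi_1(u)-\theta)/(1+cu)$ and the monotonicity of $\Psi_1$, is exactly the paper's $\Delta L\le 0$), but you then replace the appeal to LaSalle by a telescoping-sum argument that forces $v^{(n)}\to 0$ even in the borderline case $\theta=(\beta-r)(1+c)$ where $\Phi(1)=0$, followed by a quantitative $\epsilon$-analysis of the perturbed logistic update showing $u^{(n)}\to 1$. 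What your version buys is precisely the step the paper leaves implicit: LaSalle only places the $\omega$-limit set inside the largest invariant subset of $\{\Delta L=0\}$, which here is the whole segment $\{(u,0):0\le u\le 1\}$ containing both fixed points, so singling out $(1,0)$ as the limit for $u^0>0$ requires exactly the analysis of the $u$-dynamics on and near that segment that you supply and the paper omits. One caveat applies equally to your proof and to the statement itself: your forward-invariance claim concerns the open set $\{u>0,\ v<2-u\}$, whereas $M$ contains the upper boundary $v=2-u$; an initial point there with $u^0>0$ (for instance $(1/2,3/2)$) has $u^{(1)}=0$ and is then absorbed into the $v$-axis, so its orbit converges to $(0,0)$ rather than $(1,0)$. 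Thus the dichotomy in the proposition really holds for $v^0<2-u^0$, and it would be worth flagging that your second stage (and the proposition) needs this strict inequality, which is preserved under iteration by the invariance argument you sketch.
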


\begin{proof}
Define the following function on the invariant set \( M \):
\[
L(u,v) = v.
\]
Clearly, \( L(u,v) \geq 0 \) and \( L(0,0) = 0 \), \( L(1,0) = 0 \). Consider the difference:
\[
\Delta L = L(u^{(1)},v^{(1)}) - L(u,v) = v^{(1)} - v \leq 0.
\]
The set where \( \Delta L = 0 \) is given by
\[
\mathcal{B} = \{(u,v) \in M \mid \Delta L = 0\} = \{(u,0)\}.
\]
Since \( M \) is compact, it follows from LaSalle's Invariance Principle that any trajectory starting in \( M \) converges to the fixed point \((0,0) \) if \( u^0 = 0 \), or to the fixed point \((1,0) \) if \( u^0 > 0 \).

\end{proof}

\subsection{Existence of positive fixed point of (\ref{h2})}

We continue our study by investigating the behavior of the function \( \Psi_2(u) \), which is given by
\begin{equation}\label{Psi2}
\Psi_2(u) := \frac{(\beta u - r)(1+cu^2)}{u^2}.
\end{equation}
Its derivative is
\begin{equation}
\Psi_2'(u) = \frac{\beta cu^3 - \beta u + 2r}{u^3}.
\end{equation}
It is evident that one root of the equation
\begin{equation}\label{cub1}
\beta cu^3 - \beta u + 2r = 0
\end{equation}
is always negative. Therefore, the equation (\ref{cub1}) can have at most two positive real roots, denoted by \( \widehat{u}_1 \) and \( \widehat{u}_2 \) with \( \widehat{u}_1 < \widehat{u}_2 \) .

\begin{thm}\label{thm2} Let $\beta> r.$  The operator \eqref{h2} has the following fixed points:

\begin{itemize}
    \item[(i)] Let \( 0 < c \leq \frac{1}{3} \). The following statements hold:
    \begin{itemize}
        \item[(i.1)] If \( c \geq \frac{\beta - 2r}{\beta} \) and \( 0 < \theta < (\beta - r)(1+c) \), then there exists a \textbf{unique} positive fixed point.
        \item[(i.2)] If \( c < \frac{\beta - 2r}{\beta} \) and \( \theta = \Psi_2(\widehat{u}_1) \), then there exists a \textbf{unique} positive fixed point.
        \item[(i.3)] If \( c < \frac{\beta - 2r}{\beta} \) and \( (\beta - r)(1+c) < \theta < \Psi_2(\widehat{u}_1) \), then there exist \textbf{two} positive fixed points.
    \end{itemize}

    \item[(ii)] Let \( c > \frac{1}{3} \). The following statements hold:
    \begin{itemize}
        \item[(ii.1)] If \( c \geq \frac{\beta^2}{27r^2} \) and \( 0 < \theta < (\beta - r)(1+c) \), then there exists a \textbf{unique } positive fixed point.
        \item[(ii.2)] If \( c \leq \frac{\beta - 2r}{\beta} \) and \( \theta = \Psi_2(\widehat{u}_1) \), then there exists a \textbf{unique} positive fixed point.
        \item[(ii.3)] If \( c < \frac{\beta - 2r}{\beta} \) and \( (\beta - r)(1+c) < \theta < \Psi_2(\widehat{u}_1) \), then there exist \textbf{two} positive fixed points.
        \item[(ii.4)] If \( \frac{\beta - 2r}{\beta} < c < \frac{\beta^2}{27r^2} \) and \( \theta = \Psi_2(\widehat{u}_1) \) or \( \theta = \Psi_2(\widehat{u}_2) \), then there exists a \textbf{unique} positive fixed point.
        \item[(ii.5)] If \( \frac{\beta - 2r}{\beta} < c < \frac{\beta^2}{27r^2} \) and \( \Psi_2(\widehat{u}_2) < \theta < \Psi_2(\widehat{u}_1) \), then there exist \textbf{three} positive fixed points.
    \end{itemize}
\end{itemize}
\end{thm}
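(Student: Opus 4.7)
The overall strategy is to turn the existence of positive fixed points into a counting problem for the level sets of the function $\Psi_2$ defined in \eqref{Psi2}, using the already-established equivalence: $(\widetilde u,\widetilde v)$ is a positive fixed point of $V_2$ iff $\widetilde u\in(0,1)$ satisfies $\Psi_2(\widetilde u)=\theta$ and $\widetilde v=1-\widetilde u$. Thus counting positive fixed points reduces to counting solutions of $\Psi_2(u)=\theta$ in $(0,1)$, and the resulting $\widetilde v\in(0,1)$ is automatic.

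First, I would record the boundary data
\[
\lim_{u\to 0^+}\Psi_2(u)=-\infty,\qquad \Psi_2(1)=(\beta-r)(1+c),
\]
and then study the sign of $\Psi_2'(u)=P(u)/u^3$ via the cubic $P(u):=\beta c u^3-\beta u+2r$ introduced in \eqref{cub1}. A direct calculation shows that the unique positive critical point of $P$ is $u^\star=1/\sqrt{3c}$ (a local minimum), with
\[
P(0)=2r,\qquad P(1)=2r-\beta(1-c),\qquad P(u^\star)=2r-\tfrac{2\beta}{3\sqrt{3c}}.
\]
This yields the three threshold equivalences that organize the entire argument:
\[
u^\star\le 1 \ \iff\ c\ge\tfrac{1}{3},\qquad P(1)\ge 0 \ \iff\ c\ge \tfrac{\beta-2r}{\beta},\qquad P(u^\star)\ge 0 \ \iff\ c\ge \tfrac{\beta^2}{27r^2}.
\]

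Next I would combine these with the shape of $\Psi_2$ on $(0,1)$, split according to the two main regimes. When $c\le 1/3$, $P$ is monotone decreasing on $[0,1]$, so either $P\ge 0$ on $[0,1]$, in which case $\Psi_2$ is strictly increasing from $-\infty$ to $(\beta-r)(1+c)$ and we get case (i.1) via the Intermediate Value Theorem; or $P$ has the single root $\widehat u_1\in(0,1)$, whence $\Psi_2$ rises from $-\infty$ to a maximum $\Psi_2(\widehat u_1)$ and then falls to $(\beta-r)(1+c)$, so comparing $\theta$ with those two levels gives (i.2) and (i.3). When $c>1/3$, the local minimum $u^\star$ of $P$ lies in $(0,1)$; if $P(u^\star)\ge 0$ we return to the strictly monotone picture, giving (ii.1), while otherwise $P$ admits two positive roots $\widehat u_1<u^\star<\widehat u_2$. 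Whether $\widehat u_2$ lies in $(0,1)$ is controlled by $\mathrm{sgn}\,P(1)$, i.e.\ by comparing $c$ with $(\beta-2r)/\beta$: if $\widehat u_2\ge 1$ only $\widehat u_1$ is relevant and the graph of $\Psi_2$ on $(0,1)$ has the same ``one maximum'' shape as in (i.2)--(i.3), producing (ii.2)--(ii.3); if $\widehat u_2<1$, $\Psi_2$ has a local max at $\widehat u_1$ and a local min at $\widehat u_2$ inside $(0,1)$, and counting preimages of $\theta$ in the three monotone arcs yields (ii.4)--(ii.5) according to whether $\theta$ equals or lies strictly between $\Psi_2(\widehat u_2)$ and $\Psi_2(\widehat u_1)$.

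No individual step is deep; the principal difficulty is bookkeeping. One must keep the three independent thresholds $c=\tfrac13$, $c=(\beta-2r)/\beta$, and $c=\beta^2/(27r^2)$ in correct order (using the algebraic identity $\beta^3-27r^2(\beta-2r)=(\beta-3r)^2(\beta+6r)\ge 0$ to check $(\beta-2r)/\beta\le\beta^2/(27r^2)$, so that the regime of (ii.4)--(ii.5) is nonempty); one must confirm that the ``negative'' root of $P$ excluded in \eqref{cub1} really is the only negative root, so that ``two positive roots'' is the correct count whenever $P(u^\star)<0$; and in case (ii.5) one must check that $(\beta-r)(1+c)=\Psi_2(1)\le \Psi_2(\widehat u_2)$, which is what ensures the level $\theta\in(\Psi_2(\widehat u_2),\Psi_2(\widehat u_1))$ actually crosses all three monotone branches of $\Psi_2$ on $(0,1)$. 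Once this case diagram is laid out, each claim is then a one-line application of strict monotonicity and the Intermediate Value Theorem.
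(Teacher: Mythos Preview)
Your approach is essentially the same as the paper's: both reduce the problem to counting solutions of $\Psi_2(u)=\theta$ in $(0,1)$, analyze the sign of $\Psi_2'$ via the auxiliary cubic $P(u)=\beta c u^3-\beta u+2r$ (the paper's $f(u)$), locate its positive critical point at $1/\sqrt{3c}$, and split into cases according to whether $1/\sqrt{3c}$ lies in $(0,1)$ and the signs of $P(1)$ and $P(1/\sqrt{3c})$. Your extra bookkeeping---the factorization $\beta^3-27r^2(\beta-2r)=(\beta-3r)^2(\beta+6r)\ge 0$ ordering the thresholds, and the check that $P$ has exactly one negative root---is rigor the paper leaves implicit.

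One small slip: in (ii.5) your stated check $\Psi_2(1)\le\Psi_2(\widehat u_2)$ is backwards---since $\Psi_2$ is increasing on $(\widehat u_2,1)$ you have $\Psi_2(1)>\Psi_2(\widehat u_2)$ automatically. The comparison actually relevant to getting three preimages for \emph{every} $\theta$ in the stated range is $\Psi_2(1)$ versus $\Psi_2(\widehat u_1)$, a point the paper's proof also does not address.
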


\begin{proof}
Recall that the operator (\ref{h12}) does not have a positive fixed point when $\beta \leq r$. Hence, we assume that $\beta > r$. Let us investigate the equation (\ref{cub1}), as it characterizes the function $\Psi_2(u)$. Define the function
\begin{equation}\label{fun}
    f(u) = \beta c u^3 - \beta u + 2r.
\end{equation}

It can be shown that the function $f(u)$ has critical points at $\pm\frac{1}{\sqrt{3c}}$. We consider only the positive critical point, denoted as $\widetilde{u} = \frac{1}{\sqrt{3c}}$. At $\widetilde{u}$, the function $f(u)$ attains a local minimum, implying that $f(u)$ decreases for $u < \widetilde{u}$ and increases for $u > \widetilde{u}$. We now consider two cases: $\widetilde{u} \geq 1$ and $\widetilde{u} < 1$.

\textbf{Case (i):} Suppose $\widetilde{u} \geq 1$. Then, we have $0 < c \leq \frac{1}{3}$. It is evident that $f(0) = 2r > 0$.

\textbf{Subcase (i.1):} If $f(1) = \beta c - \beta + 2r \geq 0$, then equation (\ref{cub1}) has no solution in $(0,1)$ (Fig. \ref{fig2}, (a)). Consequently, $\Psi'_2(u)$ is positive in $(0,1)$, implying that $\Psi_2(u)$ is monotonically increasing in $(0,1)$. Thus, if $0 < \theta < (\beta - r)(1+c)$, the curves $y = \theta$ and $y = \Psi_2(u)$ intersect at a unique point in $(0,1)$, meaning that the operator (\ref{h2}) has a unique positive fixed point.

\textbf{Subcase (i.2):} If $f(1) = \beta c - \beta + 2r < 0$, then equation (\ref{cub1}) has a unique solution $\widehat{u}_1$ in $(0,1)$ (Fig. \ref{fig2}, (b)). This implies that $\Psi_2(u)$ is monotonically increasing in $(0,\widehat{u}_1)$ and monotonically decreasing in $(\widehat{u}_1, 1)$, so $\Psi_2(u)$ has a local maximum at $\widehat{u}_1$. Therefore, if $\theta = \Psi_2(\widehat{u}_1)$, the operator (\ref{h2}) has a unique positive fixed point.

\textbf{Subcase (i.3):} As a continuation of subcase (i.2), if $(\beta - r)(1+c) < \theta < \Psi_2(\widehat{u}_1)$, then the curves $y = \theta$ and $y = \Psi_2(u)$ intersect at two points in $(0,1)$. Consequently, the operator (\ref{h2}) has two positive fixed points.

\textbf{Case (ii):} Suppose $\widetilde{u} < 1$. Then, we have $c >\frac{1}{3}$. Now, the number of solutions of equation (\ref{cub1}) depends not only on the sign of $f(1)$ but also on the sign of $f(\widetilde{u})$.

\textbf{Subcase (ii.1):} Let $f(\widetilde{u})\geq0$, i.e., $c \geq \frac{\beta^2}{27r^2}$. Then $f(1)>0$ must be satisfied, and we have that $f(u)\geq0$ in $(0,1)$ (Fig. \ref{fig2}, (c)), which ensures that the function $\Psi_2(u)$ is monotonically increasing in $(0,1)$. Thus, the operator (\ref{h2}) has a unique positive fixed point when $0 < \theta < (\beta - r)(1+c)$.

\textbf{Subcase (ii.2):} Let $f(1)\leq0$. Then the equation $f(u)=0$ has a unique positive solution $\widehat{u}_1$ in $(0,1)$ (Fig. \ref{fig2}, (d)), where the function $\Psi_2(u)$ attains a maximum value. Thus, the operator (\ref{h2}) has a unique positive fixed point when $\theta = \Psi_2(\widehat{u}_1)$.

\textbf{Subcase (ii.3):} As a continuation of subcase (ii.2), if $(\beta - r)(1+c) < \theta < \Psi_2(\widehat{u}_1)$, then the operator (\ref{h2}) has two positive fixed points.

\textbf{Subcase (ii.4):} Let $f(1)>0$ and $f(\widetilde{u})<0$, i.e., $\frac{\beta - 2r}{\beta} < c < \frac{\beta^2}{27r^2}$. Then the equation (\ref{cub1}) has two solutions $\widehat{u}_1$ and $\widehat{u}_2$ ($\widehat{u}_1<\widehat{u}_2$) in $(0,1)$ (Fig. \ref{fig2}, (e)), where $\Psi_2(\widehat{u}_1)$ is a local maximum while $\Psi_2(\widehat{u}_2)$ is a local minimum. Hence, the operator (\ref{h2}) has a unique positive fixed point when $\theta = \Psi_2(\widehat{u}_1)$ or $\theta = \Psi_2(\widehat{u}_2)$.

\textbf{Subcase (ii.5):} As a continuation of subcase (ii.4), if $\Psi_2(\widehat{u}_2) < \theta < \Psi_2(\widehat{u}_1)$, then the curves $y = \theta$ and $y = \Psi_2(u)$ intersect at three points in $(0,1)$, implying that the operator (\ref{h2}) has three positive fixed points. The proof is complete.
\end{proof}

\begin{figure}[h!]
    \centering
    \subfigure[\tiny$c=0.25, \beta=1, r=0.5$]{\includegraphics[width=0.3\textwidth]{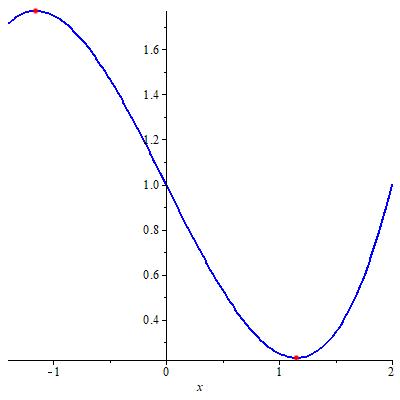}} \hspace{0.3in}
    \subfigure[\tiny$c=0.25, \beta=1, r=0.2$]{\includegraphics[width=0.3\textwidth]{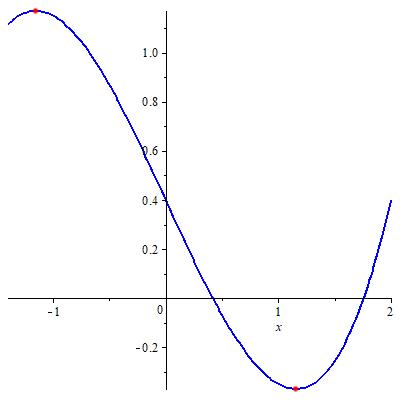}}
    \subfigure[\tiny$c=0.5, \beta=1, r=0.5$]{\includegraphics[width=0.3\textwidth]{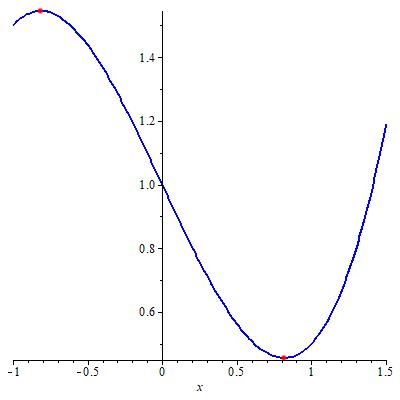}} \hspace{0.3in}
    \subfigure[\tiny$c=0.5, \beta=1, r=0.1$]{\includegraphics[width=0.3\textwidth]{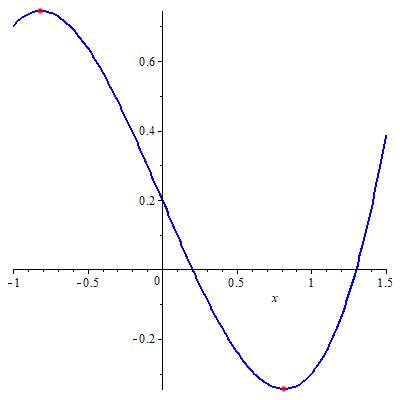}}
    \subfigure[\tiny$c=0.51, \beta=2, r=0.5$]{\includegraphics[width=0.3\textwidth]{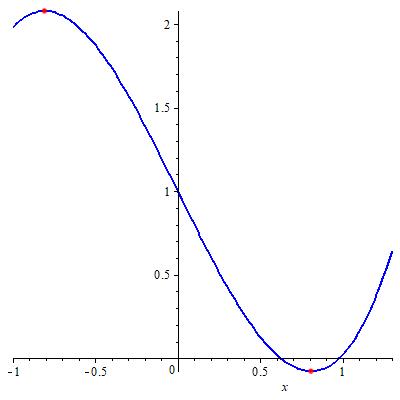}}\hspace{0.3in}
    \subfigure[\tiny$c=1, \beta=3, r=0.5$]{\includegraphics[width=0.3\textwidth]{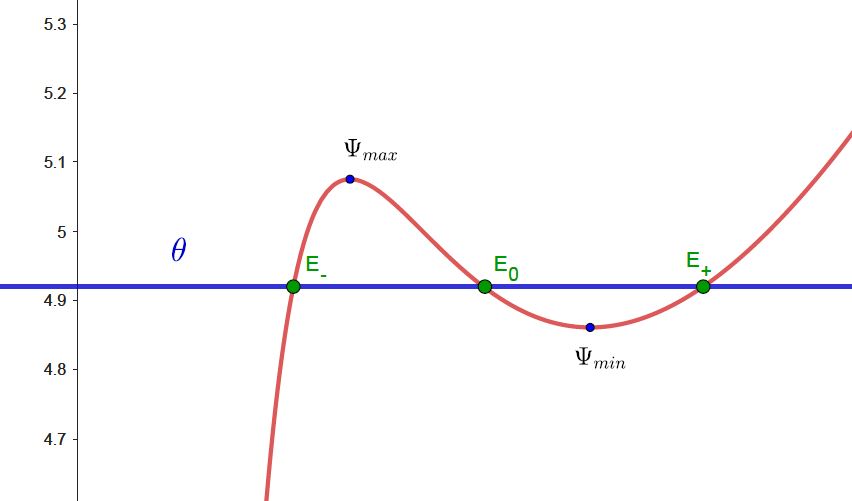}}
    \caption{Graph of \( f(u) = \beta c u^3 - \beta u + 2r \) for various parameter values of \( \beta \), \( c \), and \( r \).}
    \label{fig2}
\end{figure}

\subsection{Classification of fixed points of (\ref{h2}).}

Let $E=(u_{-}, v_{-})$, $E_0=(u_{0}, v_{0})$, and $E_{+}=(u_{+}, v_{+})$ be positive fixed points of the operator \eqref{h2} such that $u_{-} < u_{0} < u_{+}$.

\begin{thm}\label{type}  Let $h=2$ and \( q(u) \) be defined as in~\eqref{pq}.
For the fixed points $E_{\mp} = (u_{\mp}, v_{\mp})$ and $E_0 = (u_{0}, v_{0})$ of the operator \eqref{h2}, the following hold true:

\begin{itemize}
    \item[(i)] The nature of $E_{-}$ is determined as follows:
    \[
    E_{-} =
    \begin{cases}
    \text{attractive}, & \text{if } q(u_{-}) < 1, \\
    \text{repelling}, & \text{if } q(u_{-}) > 1, \\
    \text{nonhyperbolic}, & \text{if } q(u_{-}) = 1.
    \end{cases}
    \]

    \item[(ii)] The fixed point $E_{0}$ is saddle if it exists.

    \item[(iii)] The fixed point $E_{+}$ is attracting if it exits.
    \end{itemize}
\end{thm}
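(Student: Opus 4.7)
The plan is to classify each positive fixed point via a Jury-type criterion applied to the characteristic polynomial $F(\lambda, \widetilde{u}) = \lambda^2 - p(\widetilde{u})\lambda + q(\widetilde{u})$ from \eqref{chareq}. The decisive reduction is that substituting $\theta = \Psi_2(\widetilde{u})$ into $F(1, \widetilde{u}) = 1 - p(\widetilde{u}) + q(\widetilde{u})$ collapses it to
\[
F(1, \widetilde{u}) = \frac{(1-\widetilde{u})\, f(\widetilde{u})}{1 + c\widetilde{u}^2},
\]
where $f(u) = \beta c u^3 - \beta u + 2r$ is the cubic whose root structure already drove Theorem~\ref{thm2}. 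Since $\Psi_2'(u) = f(u)/u^3$, the sign of $F(1, \widetilde{u})$ coincides with the sign of $\Psi_2'(\widetilde{u})$, and I will also use the companion identity $F(-1, \widetilde{u}) = 4 - 2\widetilde{u} + F(1, \widetilde{u})$, obtained by subtracting the two evaluations.

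With these formulas in hand, the sign pattern reads directly off the proof of Theorem~\ref{thm2}: $u_-$ lies on the first increasing branch of $\Psi_2$ inside $(r/\beta, \widehat{u}_1)$, $u_0$ on the decreasing branch inside $(\widehat{u}_1, \widehat{u}_2)$, and $u_+$ on the second increasing branch inside $(\widehat{u}_2, 1)$, so that $F(1, u_-) > 0$, $F(1, u_0) < 0$, and $F(1, u_+) > 0$. Because $\widetilde{u} < 1$ forces $4 - 2\widetilde{u} > 2$, the companion identity immediately yields $F(-1, u_-) > 0$ and $F(-1, u_+) > 0$. For part~(i), both $F(\pm 1, u_-) > 0$ place us inside the Jury-type Lemma~\ref{lem1} already invoked in Proposition~\ref{prop3}, which gives the stated trichotomy in $q(u_-)$ (the critical case $q(u_-) = 1$ being precisely the Neimark--Sacker condition revisited in Section~3). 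For part~(ii), the strict inequality $F(1, u_0) < 0$ forces the characteristic polynomial to have both roots real with one strictly greater than $1$; to upgrade this to \emph{saddle} and rule out the repelling alternative I must also show $F(-1, u_0) > 0$, which I plan to obtain by expanding $(4-2u_0)(1+c u_0^2) + (1-u_0) f(u_0)$ and using the bound $|f(u_0)| < \beta u_0 (1 - c u_0^2)$, valid on the decreasing branch where $f < 0$. For part~(iii), with $F(\pm 1, u_+) > 0$ already secured, attraction reduces to $q(u_+) < 1$, equivalently to the scalar inequality $(1 - u_+) f(u_+) < u_+ (1 + c u_+^2)$, which I would attack by using $u_+ > \widehat{u}_2$ together with the cubic fixed-point identity $\beta c u_+^3 - (rc + \theta) u_+^2 + \beta u_+ - r = 0$ to eliminate $\theta$.

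The main obstacles are exactly these two scalar inequalities at the end of parts~(ii) and~(iii). Each amounts to bounding the cubic $f$ against an explicit rational function of $\widetilde{u}$ whose tightness is controlled by where $\widetilde{u}$ sits relative to the extrema $\widehat{u}_1$ and $\widehat{u}_2$. The underlying picture is clean---$E_0$ is trapped on the decreasing branch away from any flip boundary, and $E_+$ inherits attracting behaviour from $(1, 0)$ across the saddle--node collision at $\theta = \Psi_2(\widehat{u}_2)$---but turning that intuition into a sharp algebraic bound, uniform over the admissible parameter window of Theorem~\ref{thm2}, is where the real effort of the proof lies.
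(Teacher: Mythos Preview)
Your skeleton is the paper's: control the sign of $F(1,\widetilde u)$ through the cubic $f$, then settle $F(-1,\widetilde u)$ and $q$ case by case. Your identity $F(1,\widetilde u)=(1-\widetilde u)f(\widetilde u)/(1+c\widetilde u^{2})$ is in fact a cleaner packaging than the paper's detour through the auxiliary function $\varphi(x)=\beta(1+cx^{2})^{2}/(2x)$, and part~(i) then goes through exactly as you describe.

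The gap is in your proposed attacks on (ii) and (iii). For (ii), the bound $|f(u_{0})|<\beta u_{0}(1-cu_{0}^{2})$ is true (it is just $-f(u_{0})+2r>0$) but cannot close the argument: it leaves an uncontrolled factor of $\beta$ on the dangerous side, and nothing in the existence hypotheses bounds $\beta$ from above, so the resulting inequality $(4-2u_{0})(1+cu_{0}^{2})>(1-u_{0})\beta u_{0}(1-cu_{0}^{2})$ is not uniformly valid. The paper does not bound $|f(u_{0})|$ pointwise; it rewrites the target as $-f(u_{0})<\nu(u_{0})$ with $\nu(x)=(4-2x)(1+cx^{2})/(1-x)$, shows $\nu'>0$ on $(0,1)$ with $\nu(0)=4$, and then bounds the \emph{global} maximum of $-f$ on $(0,1)$ (attained at $u=1/\sqrt{3c}$) by $4$, invoking the constraint $c<\beta^{2}/(27r^{2})$ from Theorem~\ref{thm2}. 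For (iii) the paper likewise avoids the cubic fixed-point identity you propose: it introduces $\omega(x)=(x+cx^{3})/(1-x)$, notes $\omega(0)=0$ with $\omega',\omega''>0$ on $(0,1)$, and argues that the graphs of $f$ and $\omega$ cross exactly once at some $\widetilde x<\widehat u_{1}$, whence $f(u_{+})<\omega(u_{+})$ for every $u_{+}>\widehat u_{2}$, which is precisely $q(u_{+})<1$. In both parts the missing ingredient in your plan is the correct comparison function ($\nu$, respectively $\omega$) together with a global rather than local estimate on $f$.
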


\begin{proof}
Recall that $\widehat{u}_1$ and $\widehat{u}_2$ are the critical points of the function $\Psi_2(u)$, i.e., $\Psi'_2(\widehat{u}_1) = \Psi'_2(\widehat{u}_2) = 0$, where $\Psi_2(u)$ is defined in \eqref{Psi2}. Moreover, $f(u) > 0$ for $u \in (0,\widehat{u}_1) \cup (\widehat{u}_2,1)$ and $f(u) < 0$ for $u \in (\widehat{u}_1,\widehat{u}_2)$, where $f(u)$ is defined in \eqref{fun}. It is evident that $u_{-} \in (0,\widehat{u}_1)$, $u_{0} \in (\widehat{u}_1,\widehat{u}_2 )$, and $u_{+} \in (\widehat{u}_2,1 )$ (Fig \ref{fig2}, (f)).

\begin{itemize}
    \item[(i)] We show that $F(1, u_{\mp}) > 0$. From \eqref{chareq}, this is equivalent to
    \[
    \theta < \frac{\beta(1 + c u_{\mp}^2)^2}{2u_{\mp}}.
    \]
    Define the function
    \[
    \varphi(x) = \frac{\beta(1 + c x^2)^2}{2x},
    \]
    and consider the difference $\varphi(x) - \Psi_2(x)$:
    \begin{equation}
    \varphi(x) - \Psi_2(x) = \frac{c x^2 (\beta c x^3 + 2r) - \beta x + 2r}{2x^2}.
    \end{equation}
    Recall that, if $x \in (0,\widehat{u}_1) \cup (\widehat{u}_2,1)$ then $f(x) > 0$, i.e., $\beta c x^3 + 2r > \beta x$, it follows that
    \[
    \varphi(x) - \Psi_2(x) > \frac{\beta c x^3 - \beta x + 2r}{2x^2} > 0.
    \]
    Thus, we conclude that $\theta = \Psi_2(u_{\mp}) < \varphi(u_{\mp})$, which gives $\theta < \frac{\beta(1 + c u_{\mp}^2)^2}{2u_{\mp}}$. Since $F(1, u_{\mp}) > 0$, it follows that $F(-1, u_{\mp}) > 0$. By Lemma \ref{lem1}, this case is proved.

    \item[(ii)]
Similarly, if $x \in (\widehat{u}_1, \widehat{u}_2)$, then $f(x) < 0$, i.e.,
\begin{equation}
    \beta c x^3 + 2r < \beta x,
\end{equation}
which implies that $\varphi(x) - \Psi_2(x) < 0$. Consequently, we obtain that $F(1, u_{0}) < 0$, meaning one eigenvalue of the Jacobian at $(u_0,v_0)$ lies in $(1,\infty)$.

Now, we will show that $F(-1, u_{0}) > 0$ taking into account that $ \theta = \Psi_2(u_{0})$:
\begin{equation}
    F(-1, u_{0})=4-2u_0+\frac{(1 - u_{0})(\beta c u_{0}^3 - \beta u_{0} + 2r)}{1 + c u_{0}^2}>0.
\end{equation}
Note that $\beta c u_{0}^3 - \beta u_{0} + 2r<0$. Then we get
\begin{equation}
   F(-1, u_{0})>0 \ \ \Leftrightarrow  \ \ \beta u_{0}-\beta c u_{0}^3-2r< \frac{(4-2u_0)(1 + c u_{0}^2)}{1 - u_{0}}.
\end{equation}
Consider the function
\begin{equation}
    \nu(x) = \frac{(4-2x)(1 + c x^2)}{1 - x}.
\end{equation}
Obviously, $\nu(0) = 4$ and $\nu(x) \to +\infty$ as $x \to 1$. Moreover, for all $x \in (0,1)$, we observe that $\nu(x) > 0$. Consider $\nu'(x):$
\begin{equation}
    \nu'(x)=\frac{4cx^3-10cx^2+8cx+2}{(1-x)^2}.
\end{equation}
The numerator of $\nu'(x)$ is the function $4cx^3-10cx^2+8cx+2$, which has a minimum value of $\frac{56c}{27}+2>0$ at $x=\frac{2}{3}$. Thus, we obtain that $\nu'(x)>0$, which means $\nu(x)$ is monotonically increasing in $(0,1)$.

In addition, the function $\beta x-\beta c x^3-2r$ has a maximum value of $\frac{2\beta}{\sqrt{3c}}-2r$. According to Theorem \ref{thm2}, for the existence of more than one positive fixed point, it is necessary that $c<\frac{\beta^2}{27r^2}$. From this condition, we conclude that the maximum value $\frac{2\beta}{\sqrt{3c}}-2r$ is less than 4, which means the functions $\nu(x)$ and $\beta x-\beta c x^3-2r$ do not intersect in $(0,1)$ (see Fig. \ref{figg1}, (a)). Hence, we have proved that $F(-1, u_{0}) > 0$, which implies that the fixed point $E_0$ is a saddle if it exists.

    \item[(iii)] In the previous discussion, we established that \( F(1, u_{+}) > 0 \) and \( F(-1, u_{+}) > 0 \). Now, consider the condition \( q(u_{+}) < 1 \), taking into account that \( \theta = \Psi_2(u_{+}) \):

\[
    q(u_{+}) < 1 \Rightarrow  \frac{(1 - u_{+})(\beta c u_{+}^3 - \beta u_{+} + 2r)}{u_{+} (1 + c u_{+}^2)} < 1 \Rightarrow
    \beta c u_{+}^3 - \beta u_{+} + 2r < \frac{u_{+} + c u_{+}^3}{1 - u_{+}}.
\]

Next, we define the function:

\[
    \omega(x) = \frac{x + c x^3}{1 - x}.
\]

It is evident that \( \omega(0) = 0 \) and \( \omega(x) \to +\infty \) as \( x \to 1 \). Moreover, for all \( x \in (0,1) \), we observe that:
\begin{itemize}
    \item \( \omega(x) > 0 \),
    \item \( \omega'(x) > 0 \),
    \item \( \omega''(x) > 0 \).
\end{itemize}
These properties indicate that \( \omega(x) \) is monotonically increasing in \( (0,1) \) and its graph is concave.

Now, consider the behavior of the function \( f(x) = \beta c x^3 - \beta x + 2r \). It is evident that \( f(x) \) is also concave in \( (0,1) \). Consequently, the functions \( f(x) \) and \( \omega(x) \) intersect at a unique point \( \widetilde{x} \) in the first quadrant, where \( \widetilde{x} < \widehat{u}_1 \). Since \( u_{+} > \widehat{u}_{2} > \widehat{u}_{1} \), it follows that \( f(u_{+}) < \omega(u_{+}) \), proving that \( \overline{q}(u_{+}) < 1 \) (see Fig. \ref{figg1}, (b)).

Finally, by Lemma \ref{lem1}, the proof of the theorem is complete.

        \begin{figure}[h!]
    \centering
    \subfigure[\tiny The function \( \nu(x) \) is represented in blue, while the negative function \( -f(x) \) is shown in red.]{\includegraphics[width=0.35\textwidth]{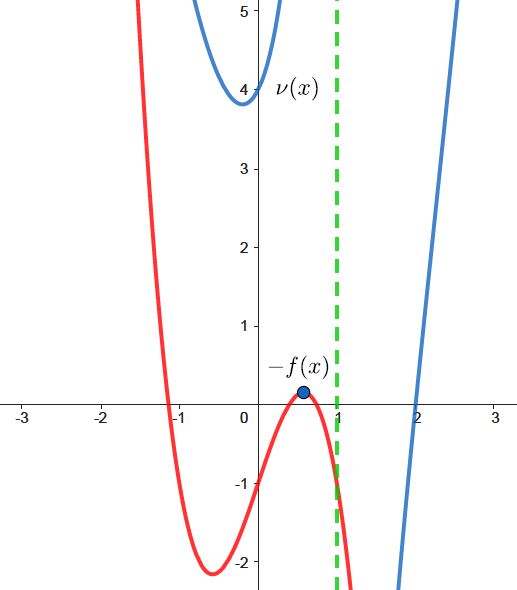}} \hspace{0.3in}
    \subfigure[\tiny The function \( \omega(x) \) is represented in blue, while \( f(x) \) is shown in red.]{\includegraphics[width=0.35\textwidth]{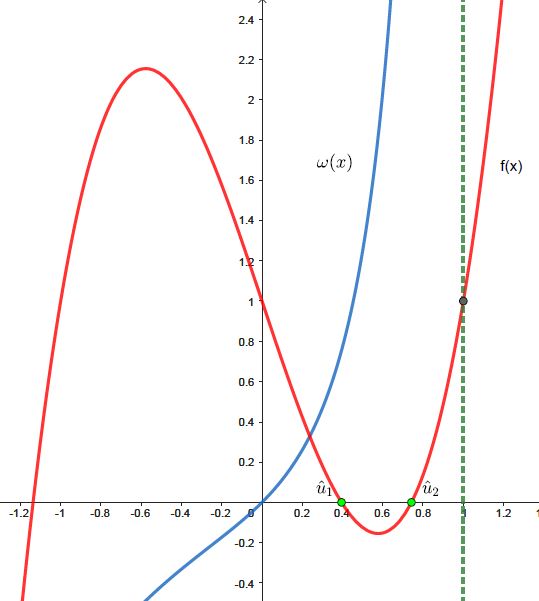}}
    \caption{To prove Theorem \ref{type}, we introduced new functions $\nu(x)$ and $\omega(x).$}
    \label{figg1}
\end{figure}
\end{itemize}
\end{proof}

Now we will find the conditions on the parameters for the set $M$ to be an invariant set with respect to operator (\ref{h2}). Define the function

\begin{equation}\label{poly}
\psi(u)=\beta c u^3-(rc-c+\theta)u^2+\beta u+1-r
\end{equation}

\begin{pro}
Let \( v^{(1)} \) be defined as in (\ref{h2}). If
\begin{equation}\label{con1}
0 < r \leq 1, \quad 0 < \theta \leq \min\left\{(1+c)(\beta-r+1),\ (3+c)(1-r)+2\beta\right\},
\end{equation}
then \( v^{(1)} \geq 0 \) for any \( u \in [0,1] \) and \( v \geq 0 \).
\end{pro}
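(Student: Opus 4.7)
The plan is to reduce the claim to a polynomial inequality on $[0,1]$ and then exploit convexity after a rescaling. Factoring $v$ out of the second equation of (\ref{h2}) and clearing the positive denominator $1+cu^{2}$, the condition $v^{(1)}\ge 0$ for $v\ge 0$ is equivalent to $\psi(u)\ge 0$ on $[0,1]$, where $\psi$ is the cubic from (\ref{poly}). The endpoint $u=0$ gives $\psi(0)=1-r\ge 0$ by the hypothesis $r\le 1$, so the task reduces to $\psi(u)\ge 0$ on $(0,1]$. Dividing by $u^{2}$ recasts this as $K(u)\ge\theta$ with
\[
K(u)=\beta c u+(1-r)c+\frac{\beta}{u}+\frac{1-r}{u^{2}}.
\]

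A direct computation gives $K''(u)>0$ on $(0,1]$ and $K'(0^{+})=-\infty$, so $K$ is strictly convex and its minimum on $(0,1]$ is attained either at $u=1$ (if $K'(1)\le 0$) or at a unique interior critical point $u^{\ast}\in(0,1)$ (if $K'(1)>0$). I expect these two possibilities to match the two quantities in the min of (\ref{con1}), so the proof naturally splits into two cases.

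In the first case, $K$ is decreasing on $(0,1]$, hence $\min K=K(1)=(1+c)(\beta-r+1)$, and the first bound in (\ref{con1}) directly yields $K(u)\ge\theta$. In the second case, the critical equation $K'(u^{\ast})=0$ reads $\beta c u^{\ast 3}=\beta u^{\ast}+2(1-r)$; substituting this into the identity $u^{\ast 2}K(u^{\ast})=\beta c u^{\ast 3}+(1-r)cu^{\ast 2}+\beta u^{\ast}+(1-r)$ eliminates the cubic term and collapses the expression to
\[
K(u^{\ast})=\frac{2\beta}{u^{\ast}}+\frac{3(1-r)}{u^{\ast 2}}+(1-r)c\ \ge\ 2\beta+(3+c)(1-r),
\]
where the last inequality uses $u^{\ast}\le 1$ together with $r\le 1$. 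The second bound in (\ref{con1}) then gives $K(u^{\ast})\ge\theta$ and closes the argument.

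The main obstacle I anticipate is precisely this algebraic step in the second case: without using the critical equation to eliminate $\beta c u^{\ast 3}$, the minimum $K(u^{\ast})$ is only implicit through a root of a cubic and admits no transparent lower bound. The natural move of substituting $\beta c u^{\ast 3}=\beta u^{\ast}+2(1-r)$ is what produces the simple form above and makes the threshold $2\beta+(3+c)(1-r)$ emerge exactly, matching the second quantity in (\ref{con1}).
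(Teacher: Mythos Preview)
Your argument is correct and complete: the reduction to $\psi(u)\ge 0$, the division by $u^{2}$ to obtain $K(u)\ge\theta$, the convexity of $K$ on $(0,1]$, the case split on the sign of $K'(1)$, and the substitution $\beta c u^{\ast 3}=\beta u^{\ast}+2(1-r)$ in the interior-minimum case all check out. Since the hypothesis bounds $\theta$ by the \emph{minimum} of the two quantities, both bounds are available regardless of which case occurs, so the two-case split cleanly covers $(0,1]$.

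Your route is genuinely different from the paper's. The paper does not analyse $K$ or any critical point at all; instead it writes the cubic $\psi$ in the Bernstein basis
\[
\psi(u)=\sum_{i=0}^{3}\omega_{i}\binom{3}{i}u^{i}(1-u)^{3-i},
\]
solves a small linear system for $\omega_{0},\ldots,\omega_{3}$, and observes that the four conditions $\omega_{i}\ge 0$ reduce exactly to $r\le 1$ together with the two $\theta$-bounds in (\ref{con1}); nonnegativity of $\psi$ on $[0,1]$ then follows from nonnegativity of the Bernstein coefficients. The paper's approach is shorter and entirely algebraic, requiring no calculus and making it transparent that the stated conditions are sufficient (though in general Bernstein positivity is only sufficient, not necessary). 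Your convexity argument, on the other hand, explains \emph{why} the two thresholds arise---one is the endpoint value $K(1)$, the other a lower bound for the interior minimum $K(u^{\ast})$---and in the endpoint case it even shows the bound $(1+c)(\beta-r+1)$ is sharp. Both approaches give only sufficient conditions (yours because $K(u^{\ast})$ is bounded below rather than computed exactly; the paper's because Bernstein positivity is stronger than polynomial nonnegativity).
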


\begin{proof}
We express the polynomial \( \psi(u) \) in the Bernstein basis polynomial form:
\[
\psi(u) = \sum_{i=0}^{3} \omega_i \binom{3}{i} u^i (1 - u)^{3 - i}.
\]
From this representation, we obtain the following system of equations:
\[
\left\{
\begin{aligned}
&\omega_0 = 1 - r, \\
&-3\omega_0 + 3\omega_1 = \beta, \\
&3\omega_0 - 6\omega_1 + 3\omega_2 = -(rc - c + \theta), \\
&-\omega_0 + 3\omega_1 - 3\omega_2 + \omega_3 = \beta c.
\end{aligned}
\right.
\]
Solving this linear system yields:
\[
\left\{
\begin{aligned}
&\omega_0 = 1 - r, \\
&\omega_1 = \frac{\beta}{3} + 1 - r, \\
&\omega_2 = \frac{2\beta - rc + c - \theta}{3} + 1 - r, \\
&\omega_3 = (c + 1)(1 - r + \beta) - \theta.
\end{aligned}
\right.
\]
It is evident that if \( \omega_i \geq 0 \) for all \( i = 0, 1, 2, 3 \), then \( \psi(u) \geq 0 \) for all \( u \in [0,1] \). The conditions stated in the proposition ensure the non-negativity of all coefficients \( \omega_i \). Thus, the proof is complete.
\end{proof}

\begin{lemma}
Assume that the conditions
 \[
0 < r \leq 1, \quad 0 < \theta \leq \min\left\{(1+c)(\beta-r+1),\ (3+c)(1-r)+2\beta\right\},
\]
hold and that either \( \beta \leq r \) or \( \theta \geq (\beta - r)(1 + c) \). Then, the set $M$ is invariant w.r.t operator (\ref{h2}), where $M$ is defined as in (\ref{minv}).
\end{lemma}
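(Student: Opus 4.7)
The plan is to follow the same two-step strategy used in Lemma~\ref{lem2}, adapted to the Holling type III operator (\ref{h2}). Fix $(u,v)\in M$ arbitrary; I must verify that $u^{(1)}\in[0,1]$, $v^{(1)}\geq 0$, and $v^{(1)}\leq 2-u^{(1)}$.

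The first two bounds come for free. The identity $u^{(1)}=u(2-u-v)$, together with $0\leq v\leq 2-u$ and $u\in[0,1]$, immediately yields $0\leq u^{(1)}\leq u(2-u)\leq 1$. The nonnegativity $v^{(1)}\geq 0$ is precisely the content of the preceding proposition, whose hypotheses are exactly the parameter assumptions of the present lemma.

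For the nontrivial inequality $v^{(1)}\leq 2-u^{(1)}$, I will first establish the stronger fact that $v^{(1)}\leq v$ on $M$. Writing
\[
v^{(1)}-v \;=\; v\left(\beta u - r - \frac{\theta u^{2}}{1+cu^{2}}\right),
\]
this reduces, for $v>0$ and $u>r/\beta$, to $\Psi_2(u)\leq \theta$; for $\beta\leq r$ or $u\leq r/\beta$ the bracket is nonpositive by inspection. Under the dichotomy in the hypothesis, either $\beta\leq r$ (nothing to check) or $\theta\geq(\beta-r)(1+c)=\Psi_2(1)$, in which case I combine this with the analysis of $\Psi_2$ on $(0,1]$ from Theorem~\ref{thm2} to conclude $\Psi_2(u)\leq \theta$ throughout $(0,1]$.

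With the monotonicity $v^{(1)}\leq v$ in hand, the invariance of $M$ follows by the same case split as in Lemma~\ref{lem2}. If $v\leq 1-u$ then $u^{(1)}\geq u$, so $2-u^{(1)}\geq 1\geq v\geq v^{(1)}$; if $v>1-u$ then $u^{(1)}\leq u$ and $v^{(1)}\leq v\leq 2-u\leq 2-u^{(1)}$. The principal difficulty is the third step: because $\Psi_2$ is not monotone on $(0,1)$ in general (in contrast to $\Psi_1$), the bound $\theta\geq\Psi_2(1)$ does not by itself yield $\theta\geq\Psi_2(u)$ for every $u\in(0,1]$, and one must argue via the sign structure of the cubic $f(u)=\beta c u^3-\beta u+2r$, possibly invoking the explicit locations of the critical points $\widehat{u}_1,\widehat{u}_2$ established in the proof of Theorem~\ref{thm2}.
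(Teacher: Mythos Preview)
Your plan mirrors the paper exactly—its entire proof is the single sentence ``The proof is similar to the proof of Lemma~\ref{lem2}''—and you have correctly isolated the one place where the analogy is not automatic: the monotonicity $v^{(1)}\le v$. In the $h=1$ case this follows because $\Psi_1$ is increasing on $(0,1]$, so $\theta\ge\Psi_1(1)$ forces $\theta\ge\Psi_1(u)$ throughout; for $h=2$ that implication can fail, and you rightly single this out as ``the principal difficulty.''

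Unfortunately this difficulty is a genuine gap, not one that the sign structure of $f$ will repair. Take $c=\tfrac14$, $\beta=2$, $r=\tfrac12$, $\theta=2$. All hypotheses of the lemma hold (in particular $(\beta-r)(1+c)=1.875\le\theta\le(1+c)(\beta-r+1)=3.125$ and $\theta\le(3+c)(1-r)+2\beta=5.625$), yet $f(u)=\tfrac12 u^{3}-2u+1$ has a root $\widehat u_{1}\approx 0.54$ in $(0,1)$ and $\Psi_2(\widehat u_{1})\approx 2.13>\theta$. Hence $\beta u-r-\theta u^{2}/(1+cu^{2})>0$ near $u=\widehat u_{1}$, so $v^{(1)}>v$ there; this is precisely regime~(i.3) of Theorem~\ref{thm2}, where two interior fixed points exist in $M$—already incompatible with the global-convergence conclusion of Proposition~\ref{prop7} that rests on this lemma. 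The Lemma-\ref{lem2} template therefore does not transfer verbatim; neither your sketch nor the paper supplies the missing argument, and a correct treatment would need either a different mechanism for controlling $v^{(1)}$ or a sharpened hypothesis guaranteeing $\theta\ge\max_{u\in(0,1]}\Psi_2(u)$.
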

The proof is similar to the proof of Lemma \ref{lem2} .

\begin{pro}\label{prop7}
Assume that the conditions
  \[
0 < r \leq 1, \quad 0 < \theta \leq \min\left\{(1+c)(\beta-r+1),\ (3+c)(1-r)+2\beta\right\},
\]
hold and that either \( \beta \leq r \) or \( \theta \geq (\beta - r)(1 + c) \). Then, for any initial point \( (u^0,v^0) \in M \), the trajectory of the operator (\ref{h2}) satisfies the following:

- If \( u^0 = 0 \), the trajectory converges to the fixed point \( E_0 = (0,0) \).

- If \( u^0 > 0 \), the trajectory converges to the fixed point \( E_1 = (1,0) \).
\end{pro}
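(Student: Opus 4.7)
The plan is to reproduce, for the Holling type III operator, the LaSalle argument used in Proposition~\ref{prop5}. The preceding lemma guarantees that the compact rectangle $M$ is forward invariant under \eqref{h2}, so the candidate Lyapunov function $L(u,v) = v$ is well defined along every forward orbit starting in $M$.

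The first (and only nontrivial) step is to establish the dissipativity
\[
\Delta L = v^{(1)} - v = v\!\left[\beta u - r - \frac{\theta u^{2}}{1+c u^{2}}\right] \leq 0 \quad \text{on } M.
\]
Since $v \geq 0$, I would show that the bracket is non-positive on $[0,1]$. When $\beta \leq r$ we have $\beta u - r \leq 0$ and the bracket is bounded above by $-\theta u^{2}/(1+c u^{2}) \leq 0$ immediately. When $\beta > r$ and $\theta \geq (\beta - r)(1+c)$, the inequality is trivial for $u \in [0, r/\beta]$, and for $u \in (r/\beta, 1]$ it is equivalent to $\Psi_2(u) \leq \theta$; the latter follows from $\Psi_2(1) = (\beta-r)(1+c) \leq \theta$ combined with the monotonicity information on $\Psi_2$ extracted from the sign analysis of $f(u) = \beta c u^{3} - \beta u + 2r$ carried out in the proof of Theorem~\ref{thm2}.

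Next, the zero set of $\Delta L$ on $M$ is the bottom edge $\mathcal B = \{(u,0) : 0 \leq u \leq 1\}$, since the bracket vanishes only at $u=0$ or on $\{v=0\}$ under the standing hypotheses. On this segment the dynamics reduces to the one-dimensional logistic map $u \mapsto u(2-u)$, whose forward orbits remain at $0$ when $u^{0}=0$ and converge monotonically to $1$ for every $u^{0} \in (0,1]$; hence the largest forward-invariant subset of $\mathcal B$ is $\{(0,0),(1,0)\}$. Since $M$ is compact, $L$ continuous, and $\Delta L \leq 0$, LaSalle's Invariance Principle yields convergence of every orbit starting in $M$ to this invariant set, with the two sub-cases separated precisely by the value of $u^{0}$.

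The hard part is the dissipativity estimate in the second paragraph: unlike the Holling type II situation of Proposition~\ref{prop5}, $\Psi_2$ is not globally monotone, and one must keep track of which case of Theorem~\ref{thm2} the parameter triple $(\beta, r, c)$ falls into in order to rule out a local maximum $\Psi_2(\widehat u_1)$ exceeding $\theta$ on $(r/\beta, 1)$. Once this uniform sign control on the bracket is secured, the LaSalle step and the dichotomy based on $u^{0}$ carry over verbatim from the proof of Proposition~\ref{prop5}.
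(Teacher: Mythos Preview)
Your approach coincides with the paper's: the proof of Proposition~\ref{prop7} in the paper is literally the single line ``The proof is similar to the proof of Proposition~\ref{prop5},'' i.e.\ exactly the LaSalle argument with $L(u,v)=v$ on the compact invariant set $M$ that you reproduce. You are in fact more explicit than the paper in flagging the non-monotonicity of $\Psi_2$ as the one place where the passage from $h=1$ to $h=2$ is not automatic; the paper does not address this point at all.
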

The proof is similar to the proof of Proposition \ref{prop5} .

\section{Bifurcation analysis of the model (\ref{h12})}

Let $\widetilde{E}=(\widetilde{u}, \widetilde{v})$ be a unique positive fixed point $\overline{E}=(\overline{u}, \overline{v})$ of the model (\ref{h1}) or positive fixed point $E_{-}=(u_{-},v_{-})$ of the model (\ref{h2}).

In this section, we derive the conditions under which a Neimark-Sacker bifurcation occurs at the positive fixed point $\widetilde{E}=(\widetilde{u}, \widetilde{v})$.

Assume that the parameters satisfy the existence conditions for $\widetilde{E}=(\widetilde{u}, \widetilde{v})$ , i.e., $\beta>r$ and $\theta<(\beta-r)(1+c)$. Let $q(u)$ be defined as in (\ref{pq}) and \( \theta_0 \) denote a solution of \( q(\widetilde{u}) = 1 \). Thus, the fixed point \( \widetilde{E} \) can undergo a Neimark-Sacker bifurcation as the parameter \( \theta \) varies within a small neighborhood of \( \theta_0 \). To clearly illustrate this process, we proceed as follows.

\textbf{Step-1}. We introduce the change of variables \( x = u - \widetilde{u} \), \( y = v - \widetilde{v} \), which shifts the fixed point \( \widetilde{E} = (\widetilde{u}, \widetilde{v}) \) to the origin. Under this transformation, system~\eqref{h12} becomes

\begin{equation}\label{bif1}
\left\{\begin{aligned}
&x^{(1)}=(x+\widetilde{u}) (1-x-y)-\widetilde{u}\\
&y^{(1)}=(y+\widetilde{v}) \left(\beta(x+\widetilde{u})+1-r-\frac{\theta(x+\widetilde{u})^h}{1+c(x+\widetilde{u})^h}\right)-\widetilde{v}.
\end{aligned}\right.
\end{equation}

\textbf{Step-2}. We give a small perturbation \( \theta^* \) to the parameter \( \theta \), such that \(
\theta = \theta_0 + \theta^*,\)
where \( \theta_0 \) denotes the bifurcation value of \( \theta \), and \( \theta^* \) is a small real parameter. Substituting this into system~\eqref{bif1}, the perturbed system takes the form:

\begin{equation}\label{bif2}
\left\{\begin{aligned}
&x^{(1)}=(x+\widetilde{u}) (1-x-y)-\widetilde{u}\\
&y^{(1)}=(y+\widetilde{v}) \left(\beta(x+\widetilde{u})+1-r-\frac{(\theta_0+\theta^*)(x+\widetilde{u})^h}{1+c(x+\widetilde{u})^h}\right)-\widetilde{v}.
\end{aligned}\right.
\end{equation}

Since \( \widetilde{u} \) is a solution of the equation \( \theta = \Psi_h(u) \), we can simplify the Jacobian matrix of the perturbed system~\eqref{bif2} evaluated at the origin \( (0,0) \). This yields:
\begin{equation}\label{jac2}
J(0,0)=\begin{bmatrix}
1-\widetilde{u} &~~~ -\widetilde{u}\\
(1-\widetilde{u})\left(\beta-\frac{h(\theta_0+\theta^*)\widetilde{u}^{h-1}}{(1+c\widetilde{u}^h)^2}\right) &~~~ 1-\frac{\theta^*\widetilde{u}^h}{1+c\widetilde{u}^h}
\end{bmatrix}
\end{equation}
and its characteristic equation is
$$\lambda^2-a(\theta^*)\lambda+b(\theta^*)=0,$$
where
$$a(\theta^*)=Tr(J)=2-\widetilde{u}-\frac{\theta^*\widetilde{u}^h}{1+c\widetilde{u}^h}=p(\widetilde{u})-\frac{\theta^*\widetilde{u}^h}{1+c\widetilde{u}^h},$$
 and since $\theta_0$ is a solution of $q(\widetilde{u})=1,$ we get
\begin{align*}
 b(\theta^*)&=\det(J)=1-\frac{\theta^*\widetilde{u}^h(1-\widetilde{u})(1+h+c\widetilde{u}^h)}{(1+c\widetilde{u}^h)^2}.
\end{align*}
Note that \( a(0) = p(\widetilde{u}) < 2 \) and \( b(0) = 1 \), hence we obtain the following pair of complex conjugate roots:

\begin{equation}\label{bif5}
\lambda_{1,2}=\frac{1}{2}[a(\theta^*)\pm i \sqrt{4b(\theta^*)-a^2(\theta^*)}].
\end{equation}
Thus, \(|\lambda_{1,2}|=\sqrt{b(\theta^*)}\) and

\begin{equation}\label{bif7}
\frac{d|\lambda_{1,2}|}{d\theta^*}\Bigm|_{\theta^*=0}\!=\!-\frac{\widetilde{u}^h(1-\widetilde{u})(1+h+c\widetilde{u}^h)}{2\sqrt{b(\theta^*)}(1+c\widetilde{u}^h)^2}\Bigm|_{\theta^*=0}\!=\!-\frac{\widetilde{u}^h(1-\widetilde{u})(1+h+c\widetilde{u}^h)}{2(1+c\widetilde{u}^h)^2}<0.
\end{equation}

Therefore, the transversality condition
\[
\frac{d|\lambda_{1,2}|}{d\theta^*} \Big|_{\theta^* = 0} \neq 0
\]
has been established. Next, we consider the non-degeneracy condition, i.e., \( \lambda_{1,2}^m(0) \neq 1 \) for \( m = 1, 2, 3, 4 \). Given that \( 1 < a(0) < 2 \) and \( b(0) = 1 \), it follows directly that \( \lambda_{1,2}^m(0) \neq 1 \) for all \( m = 1, 2, 3, 4 \). Hence, all the conditions for the occurrence of a Neimark–Sacker bifurcation are satisfied.

\textbf{Step-3}. To derive the normal form of the system (\ref{bif2}) when \( \theta^* = 0 \), we expand the system (\ref{bif2}) into a Taylor series around \( (x, y) = (0, 0) \) up to third order.
\begin{equation}\label{bif8}
\left\{\begin{aligned}
&x^{(1)}=a_{10}x\!+\!a_{01}y\!+\!a_{20}x^2\!+\!a_{11}xy\!+\!a_{02}y^2\!+\!a_{30}x^3\!+\!a_{21}x^2y\!+\!a_{12}xy^2\!+\!a_{03}y^3\!+\!O(\rho^4)\\
&y^{(1)}= b_{10}x\!+\!b_{01}y\!+\!b_{20}x^2\!+\!b_{11}xy\!+\!b_{02}y^2\!+\!b_{30}x^3\!+\!b_{21}x^2y\!+\!b_{12}xy^2\!+\!b_{03}y^3\!+\!O(\rho^4),
\end{aligned}\right.
\end{equation}
where $\rho=\sqrt{x^2+y^2}$  and
\begin{equation}
\begin{split}
&a_{10}=1-\widetilde{u}, \ \ a_{01}=-\widetilde{u}, \ \ a_{20}=-1, \ \ a_{11}=-1,\\
&a_{02}=a_{03}=a_{12}=a_{30}=a_{21}=0, \\
&b_{10}=(1-\widetilde{u})\left(\beta-\frac{h\theta_0\widetilde{u}^{h-1}}{(1+c\widetilde{u}^h)^2}\right)=1  (since \ \ q(\widetilde{u})=1),\\
&b_{01}=1, \ \ b_{02}=b_{03}=b_{12}=0, \\
&b_{20}=\frac{h\theta_0(1-\widetilde{u})\widetilde{u}^{h-2}(1+c\widetilde{u}^h+h(c\widetilde{u}^h-1))}{2(1+c\widetilde{u}^h)^3}, \\ &b_{11}=\beta-\frac{h\theta_0\widetilde{u}^{h-1}}{(1+c\widetilde{u}^{h})^2},\ \
b_{21}=\frac{h\theta_0\widetilde{u}^{h-2}(1+c\widetilde{u}^h+h(c\widetilde{u}^h-1))}{2(1+c\widetilde{u}^h)^3}, \\ &b_{30}=-\frac{h\theta_0(1-\widetilde{u})\widetilde{u}^{h-3}\left(2(1+c\widetilde{u}^{h})^2+3h(c^2\widetilde{u}^{2h}-1)+h^2(1-4c\widetilde{u}^{h}+c^2\widetilde{u}^{2h})\right)}{6(1+c\widetilde{u}^{h})^4}.
\end{split}
\end{equation}
Then
 \[
 J(\widetilde{E})=\begin{bmatrix}
a_{10} & ~~a_{01}\\
b_{10} &~~ b_{01}
\end{bmatrix}
\]
and eigenvalues of the Jacobian are:
$$\lambda_{1,2}=\frac{2-\widetilde{u}\mp i\sqrt{4\widetilde{u}-\widetilde{u}^2}}{2}.$$

Corresponding eigenvectors are
$$v_{1,2}=\begin{bmatrix}-\frac{\widetilde{u}}{2}\\1\end{bmatrix}\mp i\begin{bmatrix} \frac{\sqrt{4\widetilde{u}-\widetilde{u}^2}}{2} \\ 0\end{bmatrix}.$$

\textbf{Step-4}. Now we find the normal form of the system (\ref{bif2}). We rewrite the system (\ref{bif8}) in the following form:
\begin{equation}\label{sf}
\mathbf{x}^{(1)}=J\cdot\mathbf{x}+h(\mathbf{x})
\end{equation}
where $\mathbf{x}=(x,y)^T$ and $h(\mathbf{x})$ is nonlinear part of the system (\ref{bif8}) without $O$ notion, i.e.,
\[
h(\mathbf{x})=\begin{bmatrix} -x^2-xy\\ b_{20}x^2+b_{11}xy+b_{30}x^3+b_{21}x^2y\end{bmatrix}
\]

Let matrix
\[
T= \begin{bmatrix} \frac{\sqrt{4\widetilde{u}-\widetilde{u}^2}}{2} & -\frac{\widetilde{u}}{2}\\ 0 & 1\end{bmatrix}
\]
then
 \[
 T^{-1}= \begin{bmatrix} \frac{2}{\sqrt{4\widetilde{u}-\widetilde{u}^2}} & \frac{\widetilde{u}}{\sqrt{4\widetilde{u}-\widetilde{u}^2}} \\
 0 & 1\end{bmatrix}.
 \]

By transformation, we get that
\[
\begin{bmatrix} x \\ y\end{bmatrix}=T\cdot \begin{bmatrix} X \\ Y\end{bmatrix}.
\]

the system (\ref{bif8}) transforms into the following system

\begin{equation}\label{sf1}
\mathbf{X}^{(1)}=T^{-1}\cdot J\cdot T\cdot\mathbf{X}+T^{-1}\cdot h(T\cdot\mathbf{x})+O(\rho_1)
\end{equation}
where $\mathbf{X}=(X,Y)^T$ and $\rho_1=\sqrt{X^2+Y^2}.$ Denote
\[
h(T\cdot\mathbf{x})=\begin{bmatrix} f(X,Y) \\ g(X,Y)\end{bmatrix}
\]
where $s=\sqrt{4\widetilde{u}-\widetilde{u}^2}$ and
\[
\begin{aligned}
f(X,Y)=&-\frac{s^2}{4}X^2+\frac{s(\widetilde{u}-1)}{2}XY+\frac{\widetilde{u}(2-\widetilde{u})}{4}Y^2,\\
g(X,Y)=&\frac{b_{20}s^2}{4}X^2+\frac{s(b_{11}-b_{20}\widetilde{u})}{2}XY+\frac{\widetilde{u}(b_{20}\widetilde{u}-2b_{11})}{4}Y^2+\frac{b_{30}s^3}{8}X^3+\\ & +\frac{s^2(2b_{21}-3b_{30}\widetilde{u})}{8}X^2Y+\frac{s\widetilde{u}(3b_{30}\widetilde{u}-4b_{21})}{8}XY^2+\frac{\widetilde{u}^2(2b_{21}-b_{30}\widetilde{u})}{8}Y^3.
\end{aligned}
\]
If we denote
\[
T^{-1}\cdot h(T\cdot\mathbf{x})=\begin{bmatrix} F(X,Y) \\ G(X,Y)\end{bmatrix}
\]
then we have
\begin{equation}
\begin{split}
&F(X,Y)=c_{20}X^2+c_{11}XY+c_{02}Y^2+c_{30}X^3+c_{21}X^2Y+c_{12}XY^2+c_{03}Y^3,\\
&G(X,Y)=d_{20}X^2+d_{11}XY+d_{02}Y^2+d_{30}X^3+d_{21}X^2Y+d_{12}XY^2+d_{03}Y^3.
\end{split}
\end{equation}

where
\begin{equation}
\begin{split}
&c_{20}=\frac{s(b_{20}\widetilde{u}-2)}{4}, \ \ c_{11}=\frac{2\widetilde{u}-2+\widetilde{u}(b_{11}-b_{20}\widetilde{u})}{2},\\
&c_{02}=\frac{2\widetilde{u}(2-\widetilde{u})+\widetilde{u}^2(b_{20}\widetilde{u}-2b_{21})}{4s}, \ \ c_{30}= \frac{s^2b_{30}\widetilde{u}}{8},\\
&c_{21}=\frac{s\widetilde{u}(2b_{21}-3b_{30}\widetilde{u})}{8}, \ \ c_{12}= \frac{\widetilde{u}^2(3b_{30}\widetilde{u}-4b_{21})}{8},\ \ c_{03}= \frac{\widetilde{u}^3(2b_{21}-b_{30}\widetilde{u})}{8s},\\
&d_{20}=\frac{b_{20}s^2}{4},\ \ d_{11}=\frac{s(b_{11}-b_{20}\widetilde{u})}{2},\ \ d_{02}=\frac{\widetilde{u}(b_{20}\widetilde{u}-2b_{11})}{4},\ \ d_{30}=\frac{b_{30}s^3}{8},\\
&d_{21}=\frac{s^2(2b_{21}-3b_{30}\widetilde{u})}{8},\ \ d_{12}=\frac{s\widetilde{u}(3b_{30}\widetilde{u}-4b_{21})}{8},\ \ d_{03}=\frac{\widetilde{u}^2(2b_{21}-b_{30}\widetilde{u})}{8}.\\
\end{split}
\end{equation}

In addition, the partial derivatives at $(0,0)$ are
\begin{equation}
\begin{split}
&F_{XX}=2c_{20}, \ \ F_{XY}=c_{11}, \ \ F_{YY}=2c_{02}, \\
&F_{XXX}=6c_{30}, \ \ F_{XXY}=2c_{21}, \ \ F_{XYY}=2c_{12}, \ \ F_{YYY}=6c_{03}\\
&G_{XX}=2d_{20}, \ \ G_{XY}=d_{11}, \ \ G_{YY}=2d_{02}, \\
&G_{XXX}=6d_{30}, \ \ G_{XXY}=2d_{21}, \ \ G_{XYY}=2d_{12}, \ \ G_{YYY}=6d_{03}.
\end{split}
\end{equation}

\textbf{Step-5}. We need to compute the discriminating quantity \( \mathcal{L} \) using the following formula, which determines the stability of the invariant closed curve bifurcated from the Neimark-Sacker bifurcation of the system \eqref{sf1}:
\begin{equation}\label{lya}
\mathcal{L}=-Re\left[\frac{(1-2\lambda_1)\lambda_2^2}{1-\lambda_1}L_{11}L_{20}\right]-\frac{1}{2}|L_{11}|^2-|L_{02}|^2+Re(\lambda_2 L_{21}),
\end{equation}
where
\begin{equation}
\begin{split}
&L_{20}=\frac{1}{8}[(F_{XX}-F_{YY}+2G_{XY})+i(G_{XX}-G_{YY}-2F_{XY})],\\
&L_{11}=\frac{1}{4}[(F_{XX}+F_{YY})+i(G_{XX}+G_{YY})],\\
&L_{02}=\frac{1}{8}[(F_{XX}-F_{YY}-2G_{XY})+i(G_{XX}-G_{YY}+2F_{XY})],\\
&L_{21}=\frac{1}{16}[(F_{XXX}\!+\!F_{XYY}\!+\!G_{XXY}\!+\!G_{YYY})\!+\!i(G_{XXX}\!+\!G_{XYY}\!-\!F_{XXY}\!-\!F_{YYY})].
\end{split}
\end{equation}

Summarizing the discussion above, we arrive at the following conclusion in the form of a theorem:

\begin{thm}\label{bifurcation}
Let \( \widetilde{E} = (\widetilde{u}, \widetilde{v}) \) be a unique positive fixed point of (\ref{h1}) or a positive fixed point \( E_{-} = (u_{-}, v_{-}) \) of (\ref{h2}). Assume that the parameters satisfy the conditions for the existence of the fixed point \( \widetilde{E} = (\widetilde{u}, \widetilde{v}) \), and let \( \theta = \theta_0 \) be a solution to \( q(\widetilde{u}) = 1 \).

If the parameter \( \theta \) varies in a small neighborhood of \( \theta_0 \), then the system (\ref{h12}) undergoes a Neimark-Sacker bifurcation at the fixed point \( \widetilde{E} = (\widetilde{u}, \widetilde{v}) \). Furthermore, if \( \mathcal{L} < 0 \) (respectively, \( \mathcal{L} > 0 \)), an attracting (respectively, repelling) invariant closed curve bifurcates from the fixed point for \( \theta < \theta_0 \) (respectively, \( \theta > \theta_0 \)).
\end{thm}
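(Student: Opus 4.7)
The plan is to invoke the standard Neimark--Sacker bifurcation theorem (as formulated, e.g., in Kuznetsov \cite{Kuz}) by verifying its four hypotheses in turn: (i) the Jacobian at $\widetilde{E}$ has a pair of simple complex conjugate eigenvalues on the unit circle at the critical parameter value, (ii) the transversality condition on the modulus of the eigenvalues, (iii) the strong resonance (non-degeneracy) condition $\lambda_{1,2}^m(0)\neq 1$ for $m=1,2,3,4$, and (iv) the sign of the first Lyapunov coefficient $\mathcal{L}$ determines whether the bifurcating invariant curve is attracting or repelling. All of the bookkeeping needed for these verifications has already been staged in Steps~1--5 of the preceding discussion, so the proof essentially consists of organizing that material and pointing to the relevant displayed equations.

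First, I would note that the choice $\theta=\theta_0$ with $q(\widetilde{u})=1$ forces $b(0)=\det J(0,0)=1$, so the eigenvalues~\eqref{bif5} lie on the unit circle. Because $a(0)=p(\widetilde{u})=2-\widetilde{u}\in(1,2)$ for $\widetilde{u}\in(0,1)$, the discriminant $4b(0)-a^2(0)=4-(2-\widetilde{u})^2>0$, so the two eigenvalues are genuine complex conjugates with $|\lambda_{1,2}(0)|=1$, establishing (i). For (ii), the derivative computed in~\eqref{bif7} is strictly negative because $\widetilde{u}\in(0,1)$ and all the factors $\widetilde{u}^h$, $1-\widetilde{u}$, $1+h+c\widetilde{u}^h$, $(1+c\widetilde{u}^h)^2$ are positive; this gives the required crossing of the unit circle with nonzero speed. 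For (iii), since $\lambda_{1,2}(0)=\tfrac{1}{2}(a(0)\pm i\sqrt{4-a(0)^2})$ with $a(0)\in(1,2)$, the real part lies strictly between $1/2$ and $1$, ruling out $\lambda=\pm 1$, $\lambda^2=1$, and a direct check (using $\Re\lambda_{1,2}\notin\{0,\pm 1/2\}$) rules out third and fourth roots of unity as well.

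Next, I would justify the normal-form reduction carried out in Steps~3--4. The change of coordinates $(x,y)\mapsto T(X,Y)$ diagonalizes the linear part over $\mathbb{R}^2$ into the standard rotation-dilation block, after which the Taylor expansion~\eqref{bif8} produces the polynomial nonlinearities $F,G$ with coefficients $c_{ij}, d_{ij}$ exactly as listed. At this point the formula~\eqref{lya} for $\mathcal{L}$ is the classical expression for the first Lyapunov coefficient of a planar map at a Neimark--Sacker point, written in terms of the second- and third-order partial derivatives $F_{XX},\ldots,G_{YYY}$ evaluated at the origin. By Kuznetsov's theorem, $\mathcal{L}<0$ forces a smooth attracting invariant closed curve to bifurcate on the side of $\theta_0$ where $|\lambda_{1,2}|<1$, and $\mathcal{L}>0$ forces a repelling one on the opposite side. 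Combining this with the sign $\frac{d|\lambda_{1,2}|}{d\theta^*}|_{\theta^*=0}<0$ from~\eqref{bif7}, we have $|\lambda_{1,2}|<1$ precisely when $\theta^*>0$, i.e., $\theta>\theta_0$, so the attracting curve appears for $\theta<\theta_0$ when $\mathcal{L}<0$, and the repelling one for $\theta>\theta_0$ when $\mathcal{L}>0$, matching the statement.

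The main obstacle is bookkeeping rather than ideas: all the algebraic data needed to evaluate $\mathcal{L}$ via~\eqref{lya} come from the explicit but cumbersome coefficients $b_{20},b_{11},b_{21},b_{30}$ in terms of $\widetilde{u}$, $h$, $c$, $\beta$, $\theta_0$, and then through $T^{-1}$ into the $c_{ij},d_{ij}$. I would not compute $\mathcal{L}$ in closed form in the proof, since the sign depends on the specific parameter regime and is best assessed numerically (as the subsequent simulations in Section~4 illustrate). The delicate point worth emphasizing in the write-up is that the direction of bifurcation in $\theta^*$ is governed by the sign of $\frac{d|\lambda_{1,2}|}{d\theta^*}$, which~\eqref{bif7} fixes as negative for both $h=1$ and $h=2$ uniformly across the admissible parameter set; this is why the conclusion can be stated in the clean form given in the theorem.
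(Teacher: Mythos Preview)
Your proposal is correct and follows essentially the same approach as the paper: the paper's proof is precisely the discussion in Steps~1--5 preceding the theorem, verifying the eigenvalue, transversality, and non-resonance hypotheses and then appealing to the standard Neimark--Sacker criterion via the first Lyapunov coefficient~\eqref{lya}. Your organization of that material (grouping (i)--(iv) and explicitly tying the sign of $\frac{d|\lambda_{1,2}|}{d\theta^*}$ to the side of $\theta_0$ on which the invariant curve appears) mirrors the paper's computations exactly, including the observation that $a(0)=2-\widetilde{u}\in(1,2)$ suffices to exclude the low-order resonances.
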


\section{Numerical simulations}

\subsection{Bifurcation of the model (\ref{h1}) with given parameter values.}

Consider the system (\ref{h1}) with parameters \( c = 2 \), \( \beta = 2 \), and \( r = 0.5 \). Then, to solve the equation \( q(\overline{u}) = 1 \) with respect to \( \theta \), we obtain the following equation for $x>0$:

\[
2 - \frac{8}{9 - x - \sqrt{(1-x)^2 + 8}} = \frac{16 x}{\left( 3 + x + \sqrt{(1-x)^2 + 8} \right)^2}.
\]

From this, we get that \( \theta = \theta_0 \approx 1.2012 \), and the corresponding fixed point is \(
\overline{E} \approx (0.38, 0.62).\)

The corresponding multipliers are

\[
\lambda_1 \approx 0.81 - 0.5864i, \quad \lambda_2 \approx 0.81 + 0.5864i.
\]

Moreover, we verify that

\[
|\lambda_{1,2}| = 1, \quad \frac{d |\lambda_{1,2}|}{d\theta^*} \Bigm|_{\theta^* = 0} \approx -0.10496 < 0.
\]

The relevant coefficients are computed as

\[
L_{20} \approx 0.0225 + 0.1809i, \quad L_{11} \approx -0.1587 - 0.1012i,
\]
\[
L_{02} \approx -0.4197 + 0.0142i, \quad L_{21} \approx 0.0209 - 0.049i.
\]

Additionally, we obtain

\[
\mathcal{L} \approx -0.132 < 0.
\]

Thus, according to Theorem \ref{bifurcation}, an attracting (since $\mathcal{L}<0$ ) invariant closed curve bifurcates from the fixed point for \( \theta < \theta_0 \).

In Figure \ref{fig1} (a), the fixed point remains attracting since \( q(\overline{u}) < 1 \). In Figures \ref{fig1} (b), (c), and (d), an invariant closed curve is formed as \( q(\overline{u}) > 1 \), and furthermore, this curve is attracting. In Figure \ref{diag1}, the bifurcation diagram for each variable \( u \) and \( v \) is presented as \( \theta \) varies in the range \( [0.1, 5] \). It can be observed that the system exhibits stable behavior for \( \theta > \theta_0 \approx 1.2012 \).

\subsection{Bifurcation of the model (\ref{h2}) with given parameter values.}

Consider the system (\ref{h2}) with parameters \( c = 0.25 \), \( \beta = 2 \), and \( r = 0.5 \). Then, by solving the equation \( q(u_{-}) = 1 \) with respect to \( \theta \), we have that \( \theta = \theta_0 \approx 1.035 \), and the corresponding fixed point is
\(E_{-} \approx (0.292, 0.708).\)

The corresponding multipliers are

\[
\lambda_1 \approx 0.854 - 0.520i, \quad \lambda_2 \approx 0.854 + 0.520i.
\]

Moreover, we verify that

\[
|\lambda_{1,2}| = 1, \quad \frac{d |\lambda_{1,2}|}{d\theta^*} \Bigm|_{\theta^* = 0} \approx -0.1861 < 0.
\]

The relevant coefficients are computed as

\[
L_{20} \approx -0.0067 + 0.2255i, \quad L_{11} \approx -0.1323 - 0.0080i,
\]
\[
L_{02} \approx -0.3308 - 0.0375i, \quad L_{21} \approx 0.0342 - 0.0171i.
\]

Additionally, we obtain

\[
\mathcal{L} \approx -0.0328 < 0.
\]

Thus, based on Theorem \ref{bifurcation}, an attracting invariant closed curve emerges from the fixed point for \( \theta < \theta_0 \).

In Figure \ref{fig3} (a), the fixed point remains stable since \( q(u_{-}) < 1 \). In Figures \ref{fig3} (b), (c), and (d), an invariant closed curve is formed as \( q(u_{-}) > 1 \), and additionally, this curve is attractive. In Figure \ref{diag2}, the bifurcation diagram for each variable \( u \) and \( v \) is shown as \( \theta \) changes within the interval \( [0.1, 3] \). It can be seen that the system demonstrates stable behavior for \( \theta > \theta_0 \approx 1.035 \).

\section*{Conclusion}
In this study, we explore the dynamics of a phytoplankton-zooplankton system, where zooplankton feed on phytoplankton according to a linear functional response, while the toxin released by phytoplankton follows a Holling-type saturation mechanism. We analyze two common ecological scenarios--Holling type II and Holling type III functional responses for toxin distribution.

Our findings reveal that when toxin distribution follows a Holling type II response, the system stabilizes around a single equilibrium point, indicating a predictable balance between phytoplankton and zooplankton populations. However, under Holling type III dynamics, the interaction becomes more intricate, with the potential for up to three equilibrium states. In cases where two positive fixed points exist, one is always a saddle point; when three exist, one remains a saddle, and the other is attracting. This highlights how varying toxin levels can lead to multiple stable and unstable population states, potentially triggering abrupt ecological shifts.

Furthermore, our analysis uncovers the role of a Neimark-Sacker bifurcation, which can lead to oscillatory population dynamics. This suggests that, under certain conditions, the system may transition from a stable equilibrium to sustained population cycles--phenomena commonly observed in real aquatic ecosystems. To substantiate our theoretical results, we select specific parameters and identify the bifurcation parameter, alongside other critical quantities, demonstrating the occurrence of the Neimark-Sacker bifurcation and the formation of invariant closed curves. Additionally, we present bifurcation diagrams, maximum Lyapunov exponents, and trajectories for given parameters and initial conditions, providing further insight into the system's behavior. These results significantly enhance our understanding of how toxin-mediated interactions shape predator-prey dynamics in plankton communities and their potential ecological implications.

\section*{Appendix}

\begin{lemma}[Lemma 2.1, \cite{Cheng}]\label{lem1}
Let \( F(\lambda) = \lambda^2 + B \lambda + C, \) where \( B \) and \( C \) are real constants. Suppose \( \lambda_1 \) and \( \lambda_2 \) are the roots of the equation \( F(\lambda) = 0. \) Then the following statements hold:
\begin{itemize}
    \item[(i)] If \( F(1) > 0, \) then:
    \begin{itemize}
        \item[(i.1)] \( |\lambda_1| < 1 \) and \( |\lambda_2| < 1 \) if and only if \( F(-1) > 0 \) and \( C < 1; \)
        \item[(i.2)] \( \lambda_1 = -1 \) and \( \lambda_2 \neq -1 \) if and only if \( F(-1) = 0 \) and \( B \neq 2; \)
        \item[(i.3)] \( |\lambda_1| < 1 \) and \( |\lambda_2| > 1 \) if and only if \( F(-1) < 0; \)
        \item[(i.4)] \( |\lambda_1| > 1 \) and \( |\lambda_2| > 1 \) if and only if \( F(-1) > 0 \) and \( C > 1; \)
        \item[(i.5)] \( \lambda_1 \) and \( \lambda_2 \) are a pair of conjugate complex roots and \( |\lambda_1| = |\lambda_2| = 1 \) if and only if \( -2 < B < 2 \) and \( C = 1; \)
        \item[(i.6)] \( \lambda_1 = \lambda_2 = -1 \) if and only if \( F(-1) = 0 \) and \( B = 2. \)
    \end{itemize}

    \item[(ii)] If \( F(1) = 0, \) i.e., 1 is a root of \( F(\lambda) = 0, \) then the other root \( \lambda \) satisfies
    \[
    |\lambda| \begin{cases}
    < 1, & \text{if } |C| < 1, \\
    = 1, & \text{if } |C| = 1, \\
    > 1, & \text{if } |C| > 1.
    \end{cases}
    \]

    \item[(iii)] If \( F(1) < 0, \) then \( F(\lambda) = 0 \) has one root in the interval \( (1, \infty). \) Moreover:
    \begin{itemize}
        \item[(iii.1)] The other root \( \lambda \) satisfies \( \lambda < (=) -1 \) if and only if \( F(-1) < (=) 0; \)
        \item[(iii.2)] The other root \( \lambda \) satisfies \( -1 < \lambda < 1 \) if and only if \( F(-1) > 0. \)
    \end{itemize}
\end{itemize}
\end{lemma}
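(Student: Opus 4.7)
The plan is to reduce everything to the three identities
\[
F(1) = (1-\lambda_1)(1-\lambda_2), \qquad F(-1) = (1+\lambda_1)(1+\lambda_2), \qquad C = \lambda_1 \lambda_2,
\]
together with the discriminant $\Delta = B^2 - 4C$, which distinguishes the real-root case from the complex-conjugate case. Each of the nine conclusions in the statement is a short sign argument in these quantities, so I would first record the factorizations (a one-line consequence of $F(\lambda) = (\lambda - \lambda_1)(\lambda-\lambda_2)$) and then dispose of (ii) and (iii) before attacking the six subcases of (i).

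For (ii), $F(1)=0$ forces $\lambda_1 = 1$, hence $\lambda_2 = C$, and the trichotomy is immediate. For (iii), $F(1) < 0$ gives $(1-\lambda_1)(1-\lambda_2) < 0$, which is impossible for a complex conjugate pair (that product equals $|1-\lambda_1|^2 \geq 0$); so both roots are real, and exactly one lies in $(1,\infty)$. Calling the other $\lambda$, the factor $1+\lambda_2$ is positive, so the sign of $F(-1) = (1+\lambda)(1+\lambda_2)$ coincides with the sign of $1+\lambda$, and this sorts $\lambda$ into $(-\infty,-1)$, $\{-1\}$, or $(-1,1)$ as claimed in (iii.1) and (iii.2).

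For (i), the key split is between complex-conjugate roots and real roots. In the complex case, $|\lambda_1|^2 = |\lambda_2|^2 = \lambda_1 \overline{\lambda_1} = C$, and both $F(1) = |1-\lambda_1|^2$ and $F(-1) = |1+\lambda_1|^2$ are automatically positive (the roots are nonreal, so neither equals $\pm 1$); the configuration therefore always sits inside the hypothesis $F(1)>0$ and falls into (i.1), (i.4), or (i.5) according to whether $C$ is less than, greater than, or equal to $1$. The converse half of (i.5) reduces to observing that $C=1$ together with $-2 < B < 2$ forces $\Delta < 0$, guaranteeing complex roots. In the real case, $F(1) > 0$ places both roots on a common side of $1$, $F(-1) > 0$ places both on a common side of $-1$, and the sign and magnitude of $C = \lambda_1\lambda_2$ separate ``both in $(-1,1)$'' (case i.1) from ``both outside $[-1,1]$'' (case i.4). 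The cases $F(-1) < 0$ and $F(-1) = 0$ likewise read off (i.3), (i.2), and (i.6), with (i.2) versus (i.6) decided by whether the second root $1-B$ equals $-1$.

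The main obstacle is not any single argument but the bookkeeping: eight necessary-and-sufficient statements, together with a handful of boundary configurations (real double roots, roots exactly at $\pm 1$, and the interface between the real and complex regimes) that must be accounted for without double-counting and with both directions of each ``if and only if'' checked. I would therefore present the proof as a compact case table keyed on the signs of $F(1)$, $F(-1)$ and the value of $C$, justifying each entry and its converse by a one-line application of the three factorizations displayed above.
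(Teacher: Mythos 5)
Your argument is correct, but note that the paper does not prove this statement at all: it is quoted verbatim as Lemma~2.1 of the cited reference \cite{Cheng} and placed in the appendix as a known auxiliary result, so there is no in-paper proof to compare against. On its own merits, your reduction to the factorizations $F(1)=(1-\lambda_1)(1-\lambda_2)$, $F(-1)=(1+\lambda_1)(1+\lambda_2)$, $C=\lambda_1\lambda_2$, with the discriminant separating the real and complex-conjugate regimes, is the standard route and every case check you sketch goes through; in particular the observations that a conjugate pair forces $F(\pm1)=|1\pm\lambda_1|^2>0$ and that $F(1)<0$ excludes complex roots are exactly the right pivots for (iii) and for folding the complex case into (i.1), (i.4), (i.5). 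The only point left implicit is that in the real-root branch of (i) with $F(1)>0$ and $F(-1)>0$ the value $C=1$ cannot occur (it would force a root at $1$ or $-1$), so the dichotomy $C<1$ versus $C>1$ is genuinely exhaustive there; a one-line remark to that effect would close the bookkeeping you flag as the main obstacle.
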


\begin{defn}\label{def1}
Let \( E(x, y) \) be a fixed point of the operator \( F: \mathbb{R}^2 \to \mathbb{R}^2 \), and let \( \lambda_1, \lambda_2 \) be the eigenvalues of the Jacobian matrix \( J = J_F \) at the point \( E(x, y) \).

\begin{enumerate}
    \item[(i)] If \( |\lambda_1| < 1 \) and \( |\lambda_2| < 1 \), then the fixed point \( E(x, y) \) is called an \textbf{attractive}.

    \item[(ii)] If \( |\lambda_1| > 1 \) and \( |\lambda_2| > 1 \), then the fixed point \( E(x, y) \) is called \textbf{repelling}.

    \item[(iii)] If \( |\lambda_1| < 1 \) and \( |\lambda_2| > 1 \) (or \( |\lambda_1| > 1 \) and \( |\lambda_2| < 1 \)), then the fixed point \( E(x, y) \) is called a \textbf{saddle}.

    \item[(iv)] If either \( |\lambda_1| = 1 \) or \( |\lambda_2| = 1 \), then the fixed point \( E(x, y) \) is called \textbf{non-hyperbolic}.
\end{enumerate}
\end{defn}

\section*{Declarations}

\textbf{Ethical approval:} The author confirms that this study does not involve human participants or animals, and no ethical approval was required.

\textbf{Funding:} No funding was received for conducting this study.

\textbf{Conflict of interest:} The author declares that there are no conflicts of interest regarding the publication of this paper.

\begin{figure}[h!]
    \centering
    \subfigure[\tiny$\theta=1.205, \overline{u}\approx0.3801, q(\overline{u})\approx0.9995, (0.2, 1.1).$]{\includegraphics[width=0.45\textwidth]{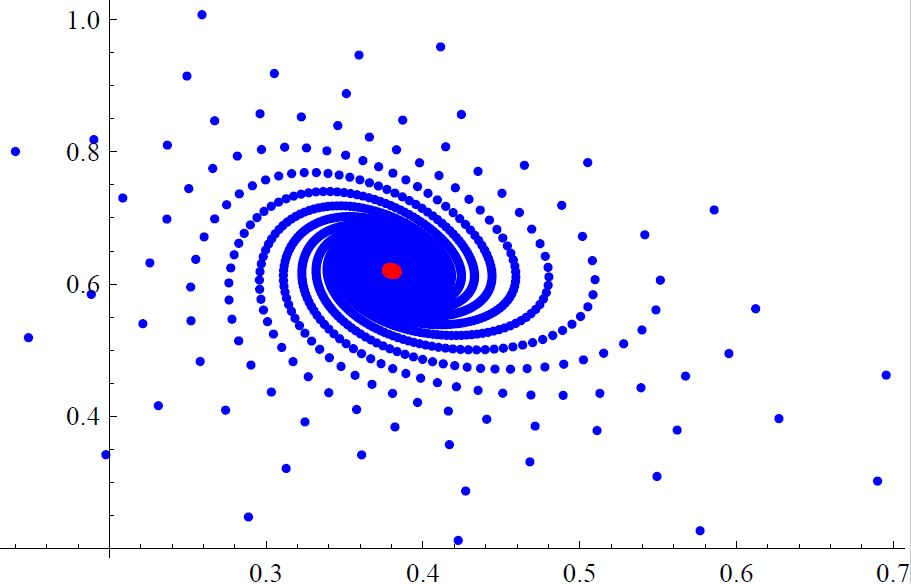}} \hspace{0.3in}
    \subfigure[\tiny$\theta=1.201, \overline{u}\approx0.37957, q(\overline{u})\approx1.00003, (0.2, 1.1).$]{\includegraphics[width=0.45\textwidth]{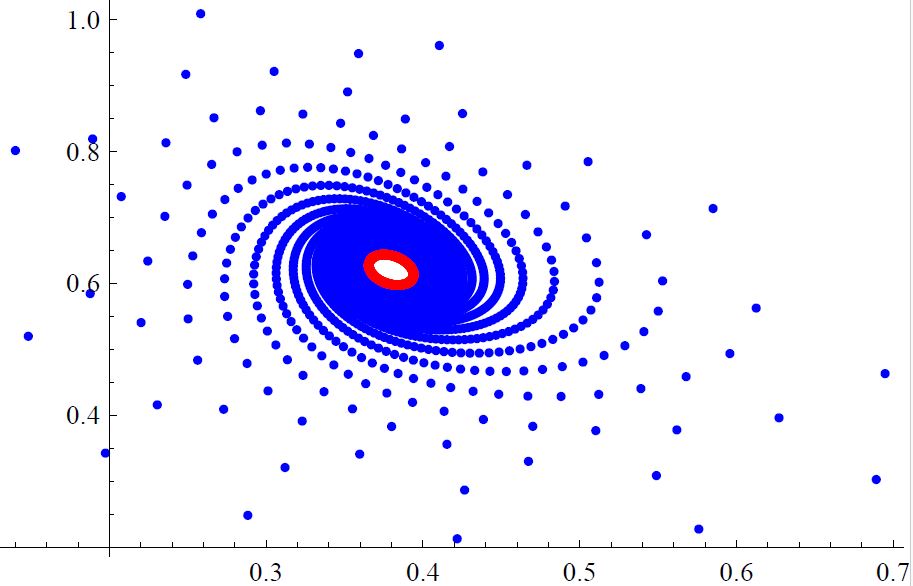}}
    \subfigure[\tiny$\theta=1.12, \overline{u}\approx0.368871, q(\overline{u})\approx1.01039, (0.2, 1.1).$]{\includegraphics[width=0.45\textwidth]{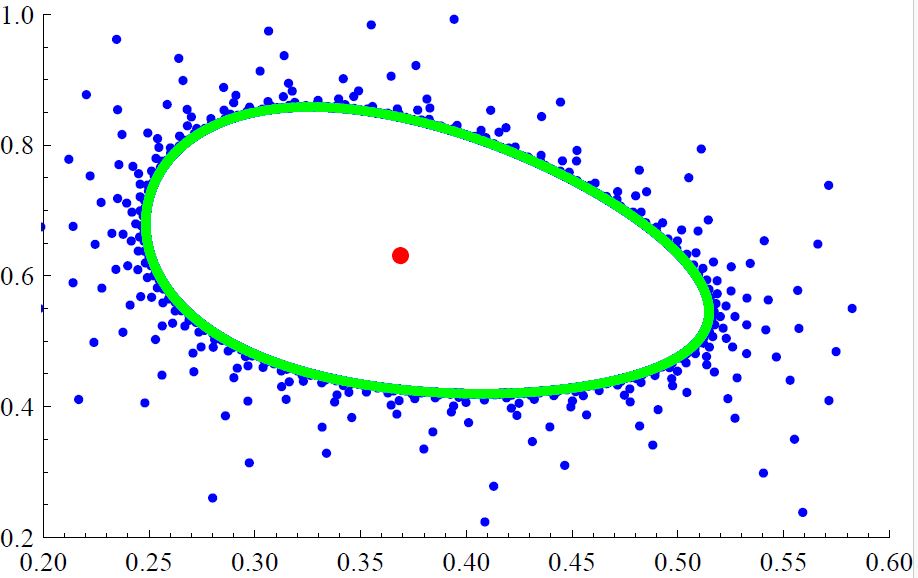}} \hspace{0.3in}
    \subfigure[\tiny$\theta=1.12, \overline{u}\approx0.368871, q(\overline{u})\approx1.01039, (0.35, 0.6).$]{\includegraphics[width=0.45\textwidth]{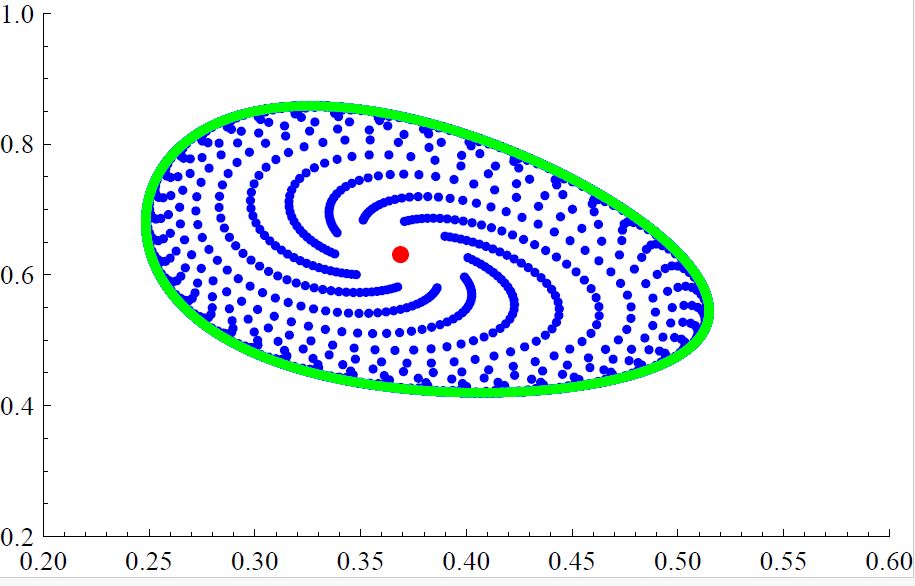}}\hspace{0.3in}
     \subfigure[\tiny$\theta=0.7, (u^0, v^0)=(0.2, 1.1).$]{\includegraphics[width=0.45\textwidth]{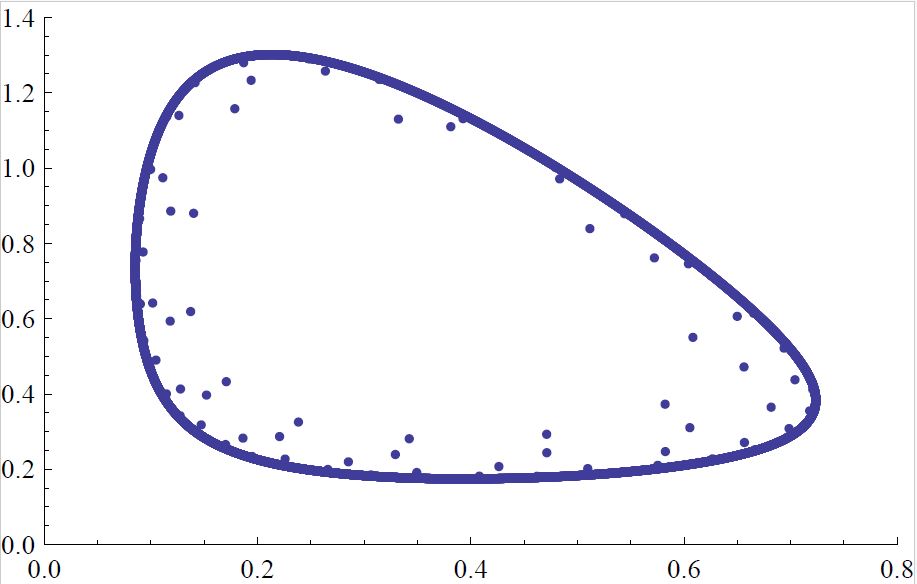}}\hspace{0.3in}
      \subfigure[\tiny$\theta=0.1, (u^0, v^0)=(0.2, 1.1).$]{\includegraphics[width=0.45\textwidth]{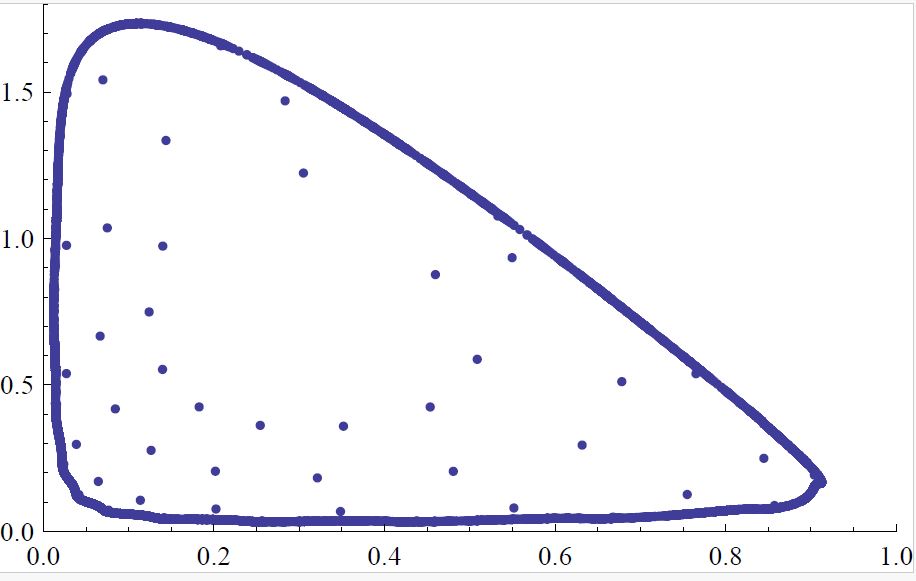}}
    \caption{Phase portraits for the system (\ref{h1}) with parameters \( c =0.25,  \beta = 2 \), \( r = 0.5 \), and \( n = 10,000 \). The red point represents the fixed point \( E_{-} \), while the green curve indicates an invariant closed curve that is attracting. In panel (c), the initial point is taken from outside the invariant closed curve, while in panel (d), the initial point is taken from inside the invariant closed curve. In panels (e) and (f), the closed curve expands and undergoes a transformation in shape.}
    \label{fig1}
\end{figure}

\begin{figure}[h!]
    \centering
    \subfigure[]{\includegraphics[width=0.45\textwidth]{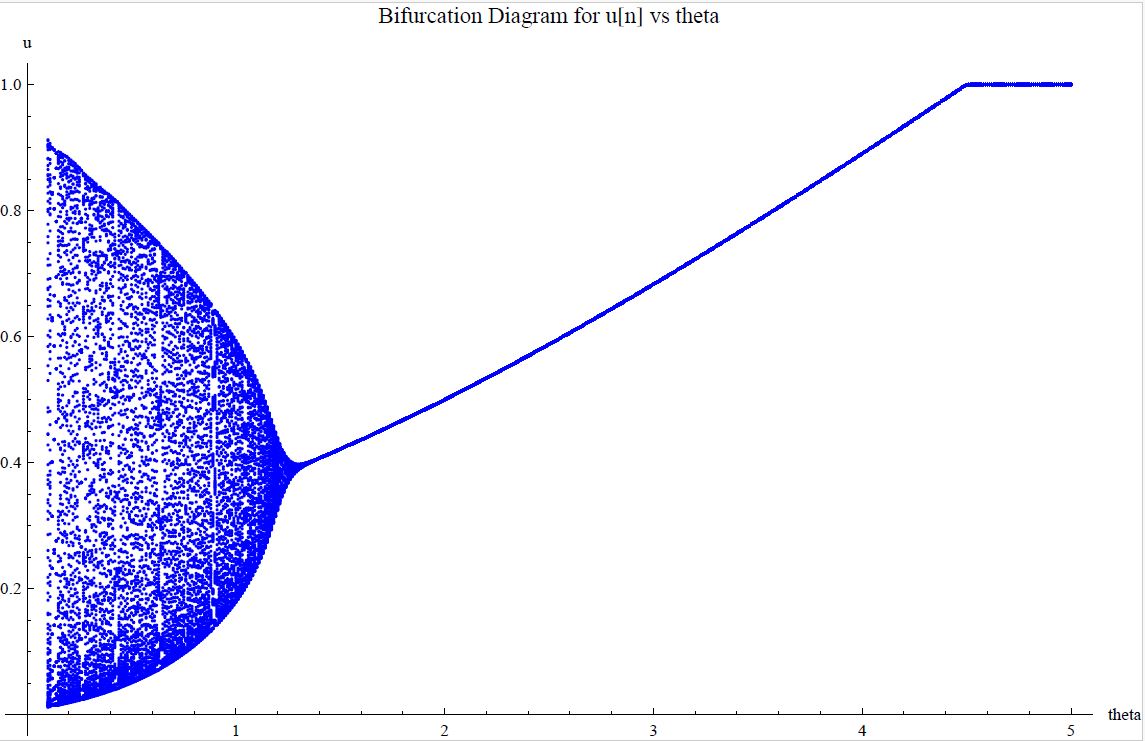}} \hspace{0.3in}
    \subfigure[]{\includegraphics[width=0.45\textwidth]{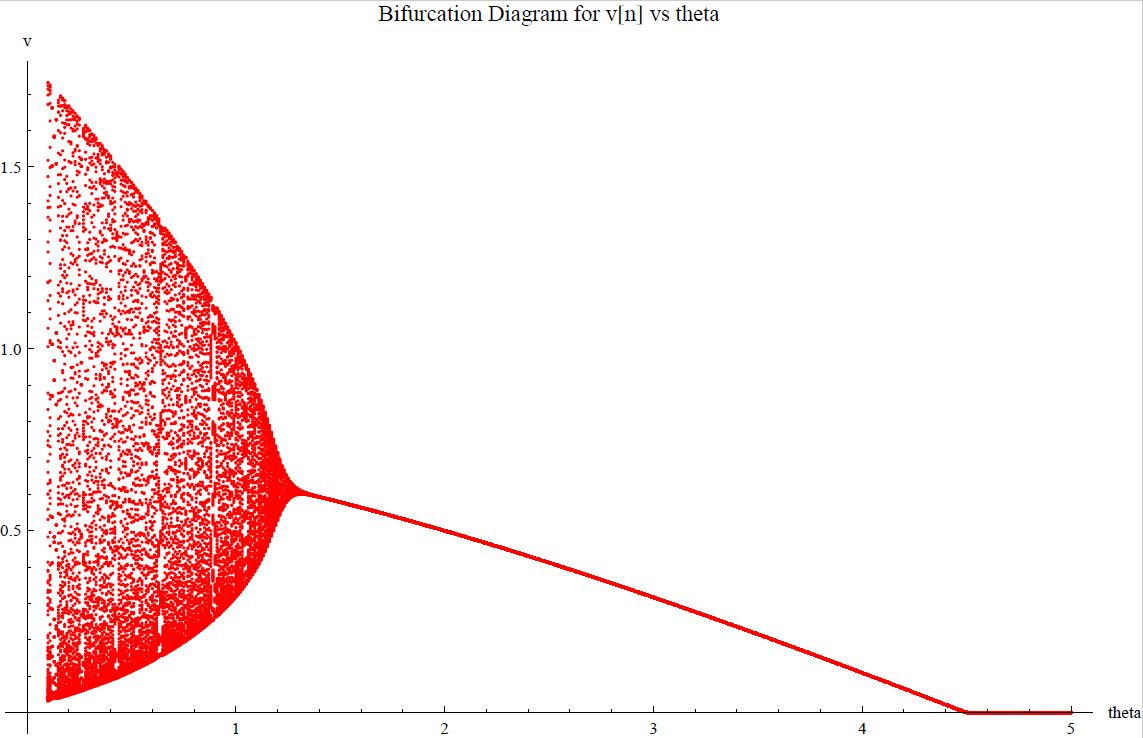}}
    \subfigure[]{\includegraphics[width=0.7\textwidth]{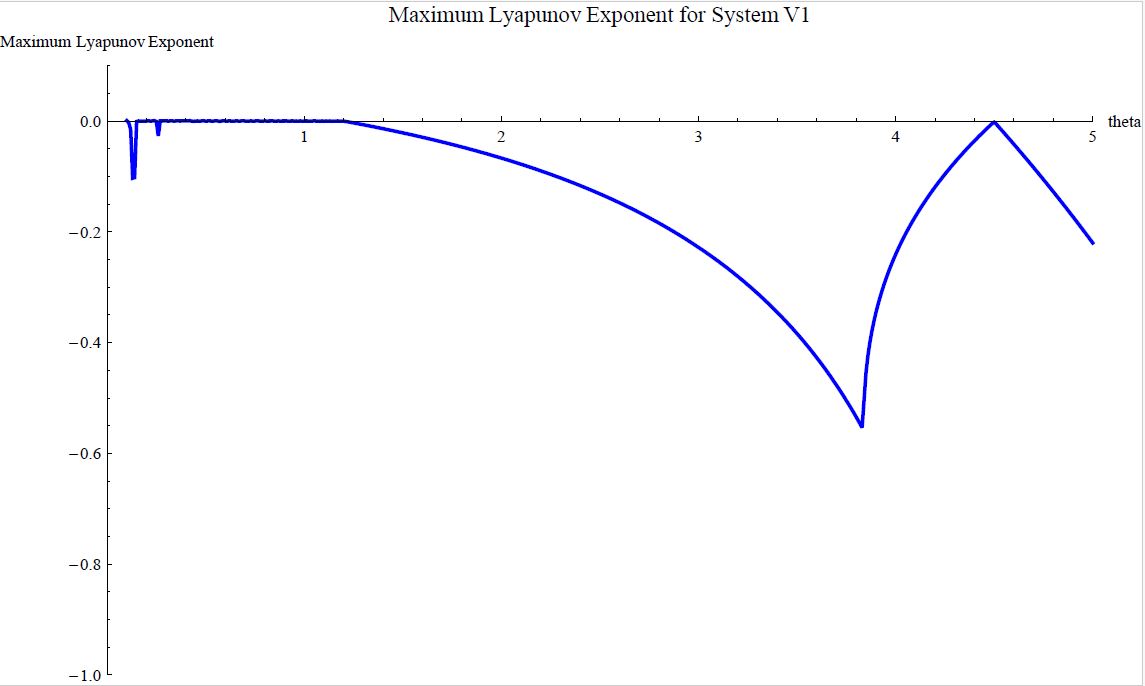}}
    \caption{Bifurcation diagrams for the system (\ref{h1}) with parameters \( r = 0.5 \), \( c = \beta = 2 \), and initial values \( u^0 = 0.2 \), \( v^0 = 1.1 \), as the bifurcation parameter \( \theta \) varies in the interval \( 0.1 \leq \gamma \leq 5 \). Panel (c) presents the maximum Lyapunov exponents corresponding to panels (a) and (b).}
    \label{diag1}
\end{figure}

\begin{figure}[h!]
    \centering
    \subfigure[\tiny$\theta=1.1, u_{-}\approx0.2977, \overline{q}(u_{-})\approx0.9894, (0.2, 1.1).$]{\includegraphics[width=0.45\textwidth]{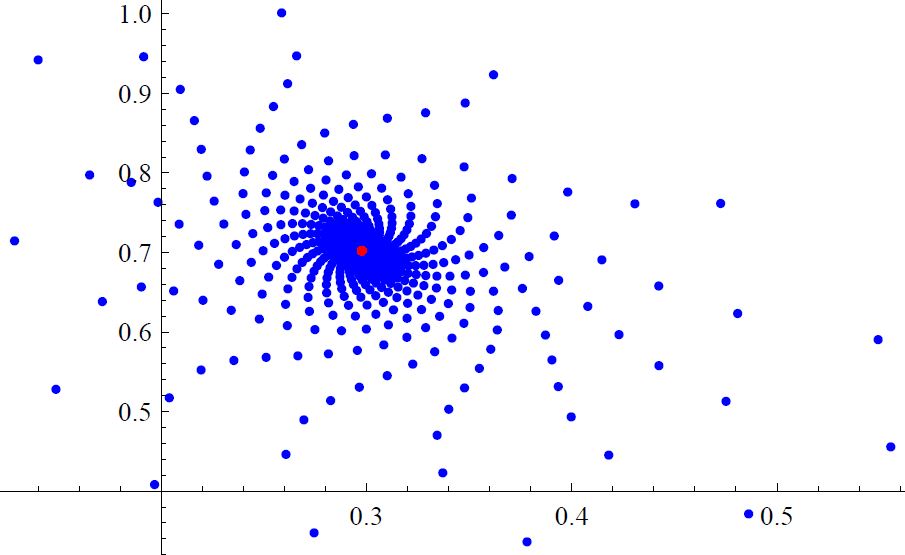}} \hspace{0.3in}
    \subfigure[\tiny$\theta=1.03, u_{-}\approx0.2934, \overline{q}(u_{-})\approx1.00115, (0.2, 1.1).$]{\includegraphics[width=0.45\textwidth]{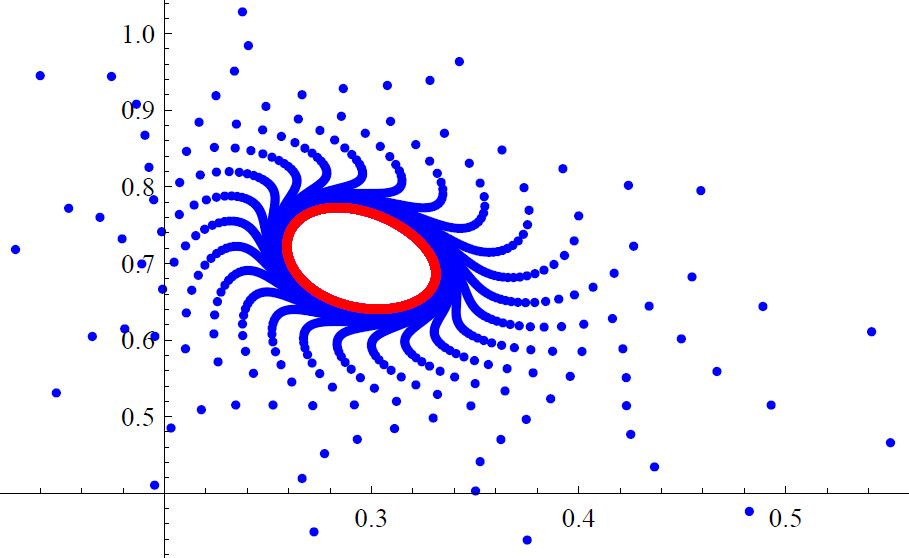}}
    \subfigure[\tiny$\theta=1.01, u_{-}\approx0.292, \overline{q}(u_{-})\approx1.0044, (0.2, 1.1).$]{\includegraphics[width=0.45\textwidth]{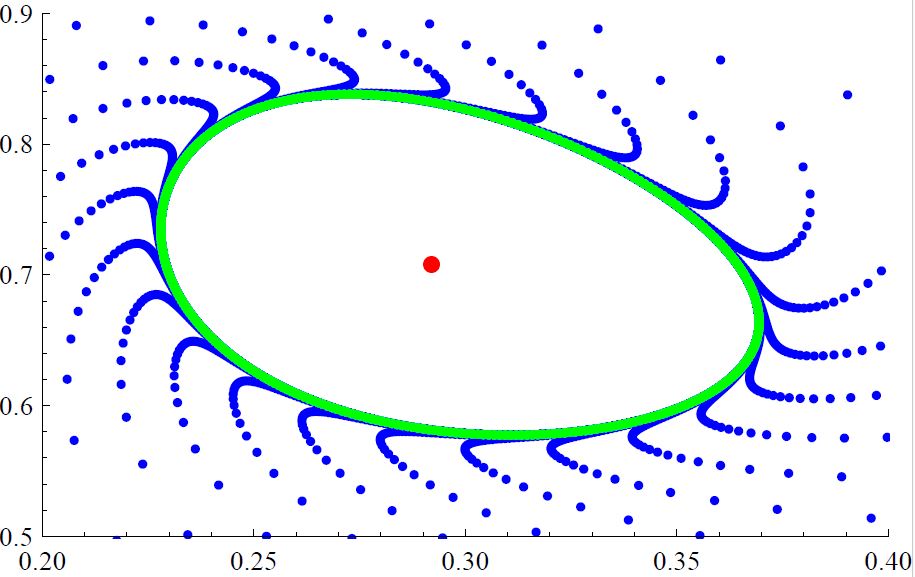}} \hspace{0.3in}
    \subfigure[\tiny$\theta=1.01, u_{-}\approx0.292, \overline{q}(u_{-})\approx1.0044, (0.29, 0.69).$]{\includegraphics[width=0.45\textwidth]{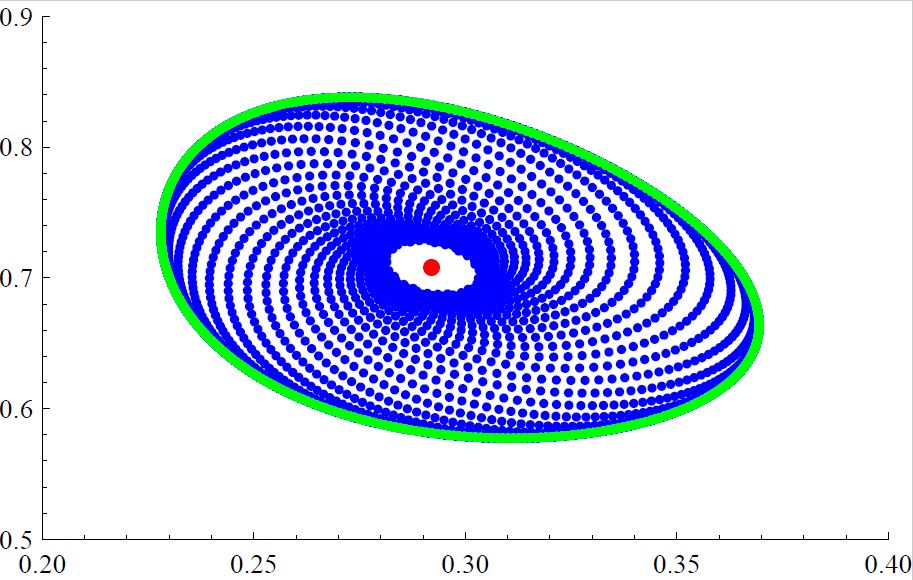}}\hspace{0.3in}
    \subfigure[\tiny$\theta=0.7, (u^0, v^0)=(0.2, 0.7).$]{\includegraphics[width=0.45\textwidth]{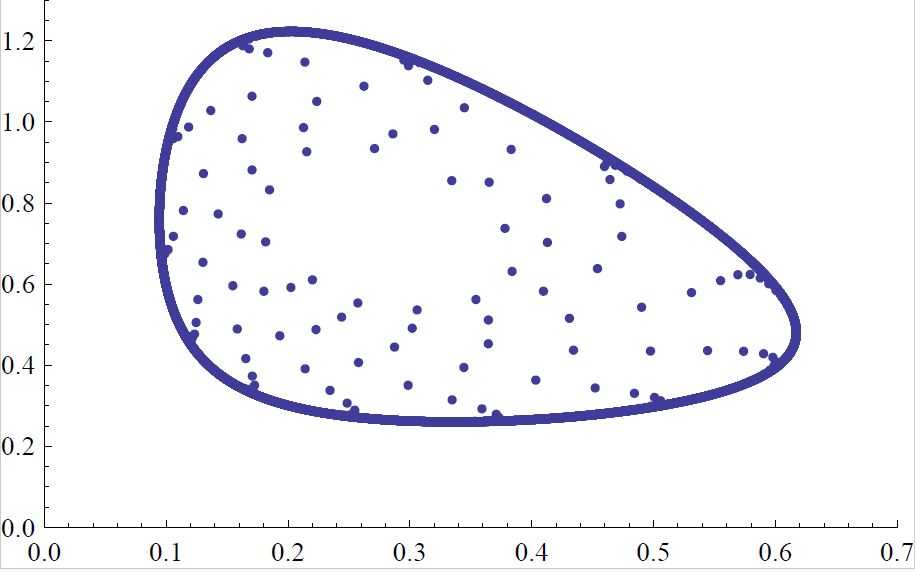}}\hspace{0.3in}
    \subfigure[\tiny$\theta=0.1, (u^0, v^0)=(0.2, 0.7).$]{\includegraphics[width=0.45\textwidth]{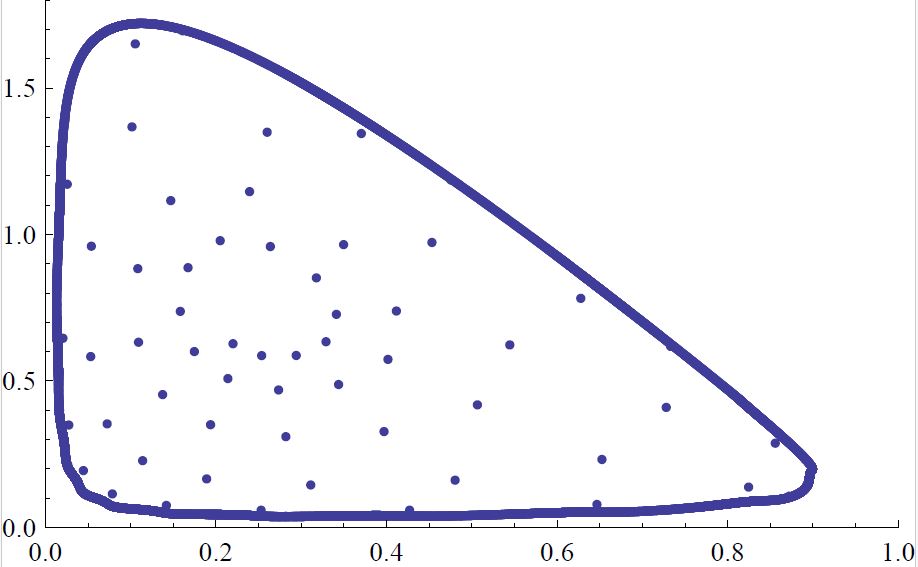}}
    \caption{Phase portraits for the system (\ref{h2}) with parameters \( c = 0.25 \), \( \beta = 2 \), \( r = 0.5 \), and \( n = 10,000 \). The red point represents the fixed point \( E_{-} \), while the green curve indicates an invariant closed curve that is attracting. In panel (c), the initial point is taken from outside the invariant closed curve, while in panel (d), the initial point is taken from inside the invariant closed curve. In panels (e) and (f), the closed curve expands and undergoes a transformation in shape.}
    \label{fig3}
\end{figure}

\begin{figure}[h!]
    \centering
    \subfigure[]{\includegraphics[width=0.45\textwidth]{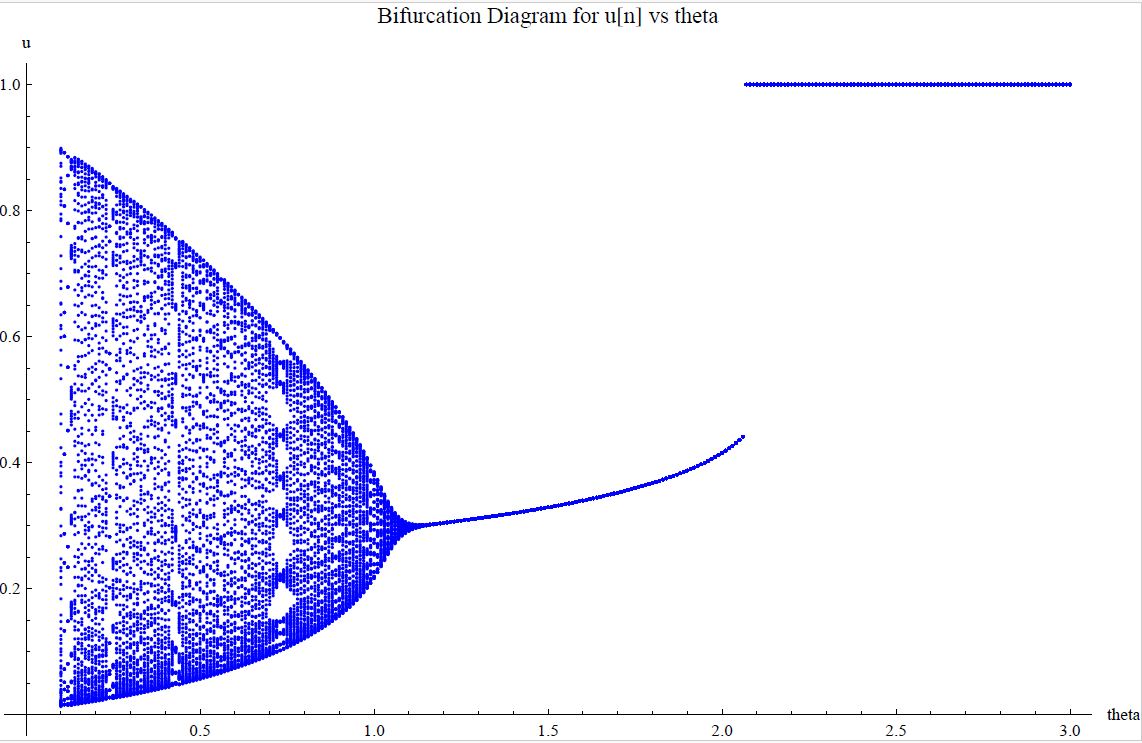}} \hspace{0.3in}
    \subfigure[]{\includegraphics[width=0.45\textwidth]{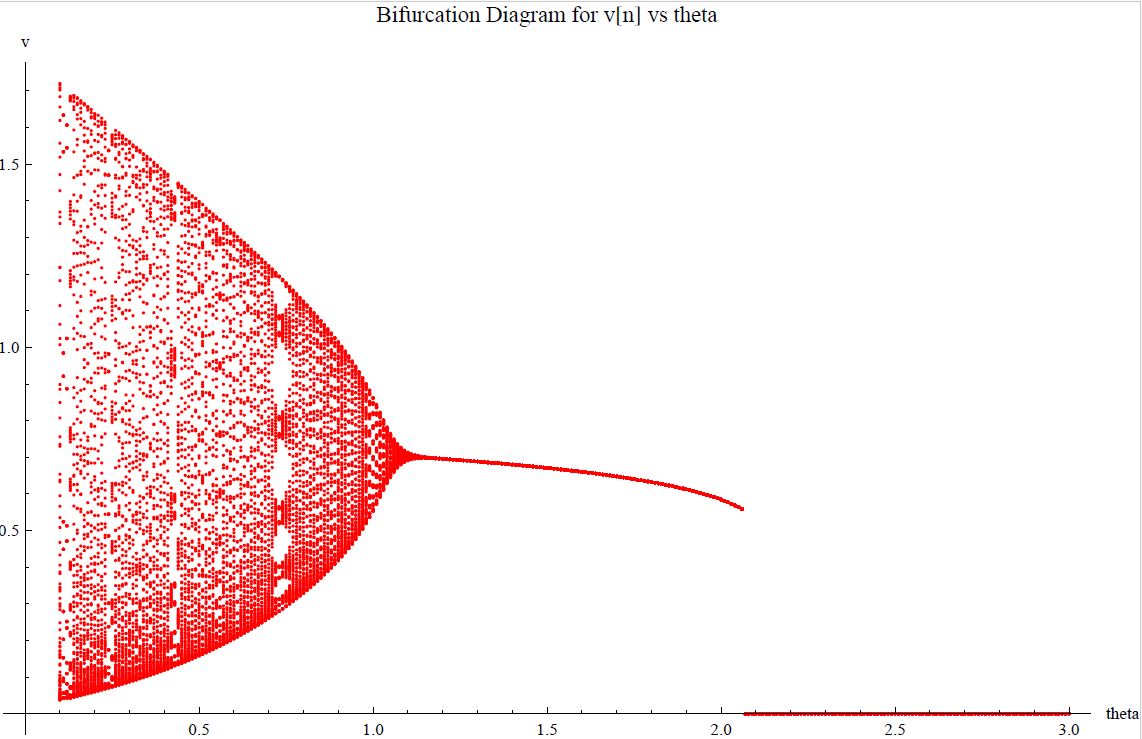}}
    \subfigure[]{\includegraphics[width=0.7\textwidth]{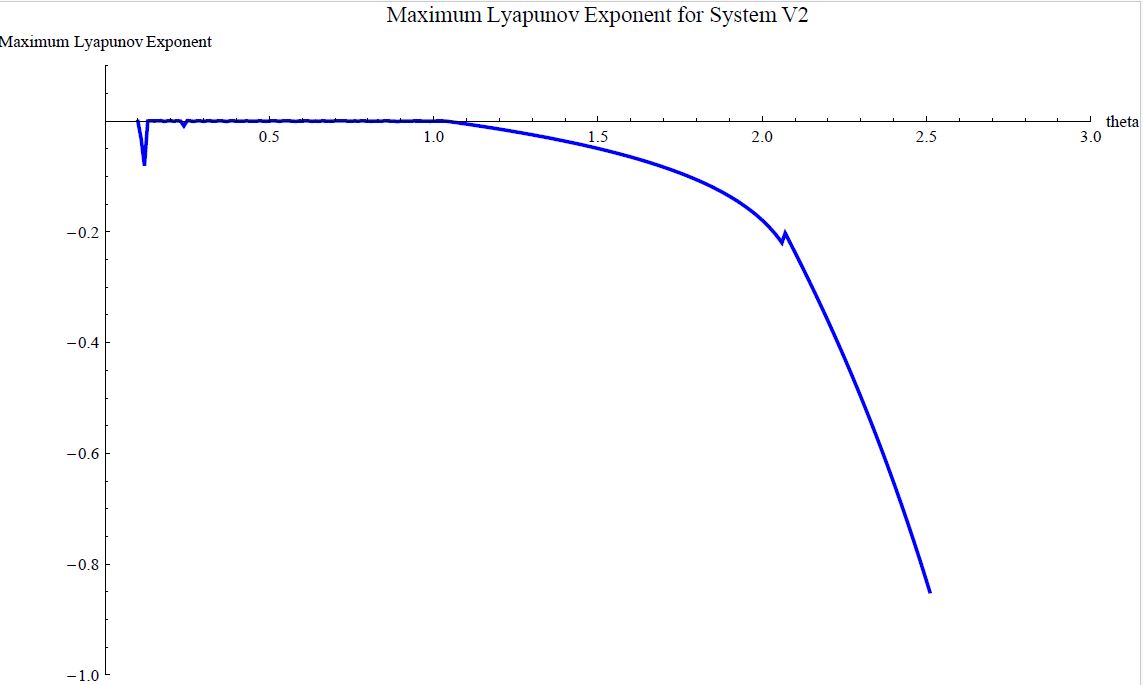}}
\caption{Bifurcation diagrams for the system (\ref{h2}) with parameters \( r = 0.5 \), \( c = 0.25 \), \( \beta = 2 \), and initial values \( u^0 = 0.2 \), \( v^0 = 1.1 \), as the bifurcation parameter \( \theta \) varies in the interval \( 0.1 \leq \gamma \leq 3 \). Panel (c) presents the maximum Lyapunov exponents corresponding to panels (a) and (b).}
    \label{diag2}
\end{figure}

\end{document}